\newcommand{\R}{\mathbb{R}}
\newcommand{\M}{\mathcal{M}}
\newcommand{\A}{\mathcal{A}}
\newcommand{\W}{{\bf{W}}}
\newcommand{\V}{{\bf{V}}}
\newcommand{\I}{{\bf{I}}}
\numberwithin{equation}{section}
\theoremstyle{plain}
\newtheorem{Thm}{Theorem}[section]
\newtheorem{Cor}[Thm]{Corollary}
\newtheorem{Lem}[Thm]{Lemma}
\theoremstyle{definition}
\newtheorem{Rem}[Thm]{Remark}
\theoremstyle{remark}
\begin{document}
\title[Wolff's inequality for intrinsic nonlinear potentials]
{Wolff's inequality for intrinsic nonlinear potentials and quasilinear elliptic equations}

\author{Igor E. Verbitsky}
\address{Department of Mathematics, University of Missouri,  Columbia,   \newline 
Missouri 65211, USA}
\email{\href{mailto:verbitskyi@missouri.edu}{verbitskyi@missouri.edu}}
\begin{abstract}
We prove an analogue of Wolff's inequality 
for the so-called intrinsic nonlinear potentials associated with 
the quasilinear elliptic equation
\[ 
-\Delta_{p} u  = \sigma u^{q}  \quad \text{in} \;\; \R^n, 
\] 
in the sub-natural growth case $0<q< p-1$, 
where $\Delta_{p}u = \text{div}( |\nabla u|^{p-2} \nabla u )$ is the $p$-Laplacian, 
and $\sigma$ is a nonnegative measurable function (or measure) on $\R^n$. 

As an application, we give a necessary and sufficient condition for the existence 
of a positive solution  $u \in L^{r}(\R^{n})$ ($0<r<\infty$) to this problem, which 
was open even in the case $p=2$.

Our version of Wolff's inequality  for intrinsic nonlinear potentials 
relies on a new characterization of  discrete Littlewood-Paley 
spaces $f^{p, q}(\sigma)$ defined in terms of characteristic functions of 
dyadic cubes in $\R^n$. 
\end{abstract}
\subjclass[2010]{Primary 35J92, 42B37; Secondary 35J20.} 
\keywords{Nonlinear potentials, Wolff's inequality, $p$-Laplacian, fractional Laplacian, discrete Littlewood--Paley spaces}
\maketitle
\section{Introduction}\label{sect:intro} 

Let $ \mathcal{M}^{+}(\R^n)$ denote the class of all locally finite Borel measures 
on $\R^n$. For $1<r<\infty$ and 
$0<\alpha<\frac{n}{r}$, the Wolff potential, or, more precisely, Havin-Maz'ya-Wolff potential (see \cite{AH}, \cite{HM},  \cite{HW}, \cite{KM}, \cite{Maz}) $\W_{\alpha, r}\sigma$ of a measure 
$\sigma \in \mathcal{M}^{+}(\R^n)$ is defined by 
\begin{equation}\label{wolff}
\W_{\alpha, r}\sigma(x) = \int_{0}^{\infty} \left[ \frac{\sigma(B(x,\rho))}{s^{n-\alpha r}}\right]^{\frac{1}{r-1}}\; \frac{d\rho}{\rho}, \quad x \in \R^n,
\end{equation}
where $B(x,\rho) = \{ y \in \R^n : |x-y|< \rho \}$ is a ball centered at $x \in \R^n$ of radius $\rho >0$.

In the linear case $r=2$, the  potential $\W_{\alpha, r}\sigma$  reduces (up to a constant multiple) to the Riesz potential 
$\I_{2\alpha}\sigma$, where 
\[
\I_{\beta}\sigma(x)=\int_{\R^n} \frac{d \sigma (y)}{|x-y|^{n-\beta}}, \quad x \in \R^n, 
\]
is the Riesz potential of order $\beta\in (0, n)$.

In \cite{HW}, a useful dyadic version of $\W_{\alpha, r}$ was introduced:
\begin{equation}\label{wolff-d}
\W^d_{\alpha, r}\sigma(x) := \sum_{Q \in \mathcal{Q}}  \left[ \frac{\sigma(Q)}
{|Q|^{1-\frac{\alpha r}{n}}}\right]^{\frac{1}{r-1}}\; \chi_Q(x), \quad x \in \R^n, 
\end{equation}
where the sum is taken over all dyadic cubes $Q\in \mathcal{Q}$.

Clearly, 
\[
\W_{\alpha, r}\sigma(x)\ge c(\alpha, r, n) \, \W^d_{\alpha, r}\sigma(x). 
\]
The converse inequality can be recovered, as usual,  by replacing $\mathcal{Q}$ 
in \eqref{wolff-d} 
with a 
shifted dyadic lattice $\mathcal{Q}_t=\{Q+t\}$ ($t\in \R^n$), and then averaging over all 
$t\in B(0, 1)$ (see, for instance,   \cite{COV2}, \cite{COV3}, \cite{V}). 
Wolff's inequality obtained in \cite{HW} says that the $(\alpha, r)$-energy 
\begin{equation}\label{wolff-ineq}
\mathcal{E}_{\alpha, r}[\sigma] := \int_{\R^n} (\I_\alpha \sigma)^{r'} dx \le C(\alpha, r) \, 
\int_{\R^n} \W_{\alpha, r}\sigma \, d \sigma,
\end{equation}
where $1<r<\infty$ and $r'=\frac{r}{r-1}$. 
The converse inequality holds as well, since obviously, 
\[
\int_{\R^n} (\I_\alpha \sigma)^{r'} dx = \int_{\R^n} \V_{\alpha, r} \sigma \, d \sigma, 
\]
where  
\[
\V_{\alpha, r}\sigma(x)  := \I_\alpha [(\I_\alpha \sigma)]^{r'-1}(x), \quad x \in \R^n,
\]
is the Havin-Maz'ya potential introduced in \cite{HM}. It is easy to see (see  \cite{HM}, \cite{Maz}) that 
\[
\V_{\alpha, r}\sigma(x) \ge c (\alpha, r, n) \,  \W_{\alpha, r}(x). 
\]
Hence, Wolff's inequality demonstrates that 
\begin{equation}\label{wolff-in}
c   \, \int_{\R^n} \W_{\alpha, r}\sigma \, d \sigma \le \mathcal{E}_{\alpha, r}[\sigma] \le C   \, \int_{\R^n} \W_{\alpha, r}\sigma \, d \sigma, 
\end{equation}
where positive constants $c, C$ depend only on $\alpha, r, n$. 

One can also use dyadic potentials $\W^d_{\alpha, r}\sigma$ in place of 
$\W_{\alpha, r}\sigma$  in \eqref{wolff-in}, which yields the following 
discrete form of Wolff's inequality (\cite{HW}):
\begin{equation}\label{wolff-in-d}
 \mathcal{E}_{\alpha, r}[\sigma] \approx 
 \sum_{Q \in \mathcal{Q}}   \frac{ [\sigma(Q)]^{r'} }
{|Q|^{\frac{n-\alpha r}{(r-1)n}}},
  \end{equation}
where the constants of equivalence  depend only on $\alpha, r,$ and  $n$.

There are similar Wolff's inequalities for potentials $\W_{\alpha, p} \sigma$, since 
 for any $r>0$, $1<p<\infty$ and $0<\alpha<\frac{n}{p}$, we have 
 \[
 \Vert \W_{\alpha, p} \sigma\Vert^{p-1}_{L^r(\R^n)} \approx \Vert \I_{\alpha p} \sigma\Vert_{L^\frac{r}{p-1}(\R^n)}.  
 \]
 Consequently,
 \begin{equation}\label{wolff-w}
\int_{R^n}  (\W_{\alpha, p} \sigma)^r dx \approx \int_{R^n}  (\W^d_{\alpha, p} \sigma)^r dx\approx
 \sum_{Q \in \mathcal{Q}}   \frac{ [\sigma(Q)]^{\frac{r}{p-1}} }
{|Q|^{\frac{(n-\alpha p) r-n(p-1)}{(p-1)n}}},
  \end{equation}
where the constants of equivalence  depend only on $\alpha, p, r,$ and  $n$.

Several proofs of \eqref{wolff-in} and its variations are known; in particular, it can be deduced 
from a weighted norm inequality  of Muckenhoupt and Wheeden 
for fractional integrals \cite{MW} (see also \cite{AH}, \cite{HJ}, \cite{JPW}, \cite{V}). 
 However,  the original proof \cite{HW} 
via dyadic potentials $\W^d_{\alpha, r}\sigma$ is most direct, and useful 
in more general situations. (See, for instance, a two-weight version and its applications 
in \cite{COV3}, \cite{HV1}, \cite{HV2}.)

The following important result due to T.~Kilpel\"{a}inen and J.~Mal\'y 
\cite{KiMa} 
gives precise pointwise estimates of $p$-superharmonic solutions 
$u\ge 0$ to the equation 
\begin{equation}\label{p-lap}
\begin{cases}
-\Delta_{p} u = \sigma \quad \text{in} \;\; \mathbb{R}^n, \\ 
\liminf\limits_{\vert x \vert \rightarrow \infty} u(x) = 0,
\end{cases}
\end{equation}
in terms of potentials $\W_{1, p} \sigma$: \textit{Let $\sigma \in \mathcal{M}^{+}(\mathbb{R}^n)$ and $p>1$. Suppose 
$u$ is a $p$-superharmonic function in $\mathbb{R}^n$ 
satisfying \eqref{p-lap}. Then there exists a positive constant $K=K(n,p)$ such that}
\begin{equation}\label{k-m}
K^{-1}\W_{1,p} \sigma(x) \leq u(x) \leq K \W_{1,p}\sigma(x).
\end{equation} 
Moreover, such a solution to \eqref{p-lap} 
exists if and only if $1<p<n$, and $\W_{1,p} \sigma(x)<\infty$ for some  $x \in \R^n$, or equivalently, 
\begin{equation}\label{k-m-cond}
\int_{1}^{\infty} \left[ \frac{\sigma(B(0,\rho))}{s^{n-\alpha r}}\right]^{\frac{1}{r-1}}<\infty.
\end{equation} 
If \eqref{k-m-cond} holds, then $\W_{1,p} \sigma(x)<\infty$, $dx$-a.e. (and quasi-everywhere). 

Throughout  this paper, we use  $p$-superharmonic solutions, or equivalently, 
locally renormalized solutions to equations involving the $p$-Laplace operator. We refer to  \cite{HKM}, \cite{KKT} for the corresponding definitions and properties of such solutions.

Let us now consider the quasilinear elliptic problem 
\begin{equation}\label{eq:p-laplacian}
\begin{cases}
-\Delta_{p} u  = \sigma u^{q}, \quad u\ge 0  \quad \text{in} \;\; \R^n, \\
\liminf\limits_{x\rightarrow \infty}u(x) = 0,
\end{cases}
\end{equation}
in the sub-natural growth case $0<q< p-1$, where $ \sigma \in \M^{+}(\R^n)$. 
We assume here that $u\in L^{q}_{loc}(\Omega, d\sigma)$, so that the right-hand side of \eqref{eq:p-laplacian} is a Radon measure, and we can use $p$-superharmonic, 
or locally renormalized  solutions $u$, as in the case of \eqref{p-lap}. 

The existence  of solutions $u \in L^{\infty}(\R^n)$ 
to \eqref{eq:p-laplacian}  was characterized by Brezis and Kamin  \cite{BK} in the case $p=2$. 
They  also proved uniqueness of bounded solutions.  In fact, for all  $1<p<\infty$,
 a solution $u \in L^{\infty}(\R^n)$ 
to \eqref{eq:p-laplacian}  exists if and only if $\W_{1,p} \sigma  \in L^{\infty}(\R^n)$  (see \cite{CV3}). However, a similar problem for solutions $u \in L^{r}(\R^n)$ with $r<\infty$ 
turned out to be more complicated. Some sharp sufficient 
conditions for that were established recently in \cite{SV3} (see also \cite{SV1}, 
\cite{SV2} where finite energy solutions and their generalizations are treated). 

In this paper, we give a necessary and sufficient condition on $\sigma$, in terms of integrability of  nonlinear potentials, for the existence of a positive  solution 
$u \in L^{r}(\R^n)$, $0<r<\infty$, 
to problem \eqref{eq:p-laplacian}.

The following bilateral pointwise estimates of nontrivial (minimal) solutions $u$ to 
\eqref{eq:p-laplacian} in the  case $0<q<p-1$ were obtained in \cite{CV2}:
\begin{equation} \label{two-sided}
c^{-1} [(\W_{1, p} \sigma)^{\frac{p-1}{p-1-q}}+\mathbf{K}_{1, p, q}  \sigma] 
\le u \le c [(\W_{1, p} \sigma)^{\frac{p-1}{p-1-q}} +\mathbf{K}_{1, p, q}  \sigma],  
\end{equation}
where $c>0$ is a constant which  depends only on $p$, $q$, and $n$. 

Here $\mathbf{K}_{1, p, q}$ is the so-called \textit{intrinsic} Wolff potential 
associated with \eqref{eq:p-laplacian}, which was introduced in \cite{CV2}.  
It is defined in terms of the  localized weighted norm inequalities,    
\begin{equation} \label{weight-lap}
\left(\int_{B} |\varphi|^q \, d \sigma\right)^{\frac 1 q} \le \varkappa(B) \,  ||\Delta_p \varphi||^{\frac{1}{p-1}}_{L^1(\R^n)},  
\end{equation} 
for all test functions $\varphi$ such that  $-\Delta_p \varphi \ge 0$, $\displaystyle{\liminf_{x \to \infty}} \, \varphi(x)=0$. Here $\varkappa(B)$ denotes the least constant in \eqref{weight-lap} 
associated with the measure $\sigma_B=\sigma|_B$ restricted to a ball $B$.  
Then the intrinsic potential $\mathbf{K}_{1, p, q}$ is defined by 
\begin{equation} \label{potentialK}
\mathbf{K}_{1, p, q}  \sigma (x)  =  \int_0^{\infty} \left[\frac{ \varkappa(B(x, r))^{\frac{q(p-1)}{p-1-q}}}{r^{n- p}}\right]^{\frac{1}{p-1}}\frac{dr}{r}, \quad x \in \R^n.
\end{equation} 
As was shown in \cite{CV2},  $\mathbf{K}_{1, p, q}  \sigma \not\equiv + \infty$ if and only if 
\begin{equation} \label{suffcond1}
\int_1^{\infty} \left[\frac{ \varkappa(B(0, r))^{\frac{q(p-1)}{p-1-q}}}{r^{n- p}}\right]^{\frac{1}{p-1}}\frac{dr}{r} < \infty.  
\end{equation}

In a similar way, we define constants $ \varkappa(Q)$ for cubes $Q$ in place of $B$, 
and the dyadic potentials 
\begin{equation} \label{potentialK-d}
\mathbf{K}^d_{1, p, q}  \sigma (x)  =  \sum_{Q \in \mathcal{Q}} \left[\frac{ \varkappa(Q)^{\frac{q(p-1)}{p-1-q}}}{|Q|^{1- \frac{p}{n}}}\right]^{\frac{1}{p-1}}\chi_Q(x), \quad x \in \R^n.
\end{equation}

More general fractional potentials $\mathbf{K}_{\alpha, p, q}$, along with their dyadic analogues,  are defined in  
Sec. \ref{sect:wolff}.

Thus, a necessary 
and sufficient condition for the existence of a solution $u \in L^r(\R^n)$ 
to \eqref{eq:p-laplacian} is given by: 
\begin{equation} \label{cond-two-sided}
\W_{1, p}  \sigma \in L^{\frac{r(p-1)}{p-1-q}}(\R^n) \quad \textrm{and} \quad \mathbf{K}_{1, p, q}  \sigma \in L^r(\R^n).
\end{equation}

In fact, as we will show below, the first condition in  \eqref{cond-two-sided} 
is a consequence of the second one, despite differences in  pointwise behavior 
of $(\W_{1, p} \sigma)^{\frac{p-1}{p-1-q}}$ and $\mathbf{K}_{1, p, q}  \sigma$. 

 Moreover, we will simplify to 
some degree the second condition
 in \eqref{cond-two-sided} by proving an analogue of Wolff's inequality 
 \eqref{wolff-w} for potentials $\mathbf{K}_{1, p, q}  \sigma$.

Similar  results hold  for the fractional Laplace problem
\begin{equation} \label{eq:frac-laplacian}
\begin{cases}
\left(-\Delta \right)^{\alpha} u = \sigma u^{q}, \quad u \ge 0  \quad \text{in} \;\; \mathbb{R}^n, \\
\liminf\limits_{x \rightarrow \infty}u(x) = 0,
\end{cases}
\end{equation}
where $0<q<1$ and  $0< \alpha < \frac{n}{2}$. They are new even in the classical case $\alpha=1$,  or if $\sigma$ is a locally integrable function on $\R^n$.

\begin{Thm}\label{thm:main-p-laplacian}  
Let $1<p<n$, $0<q<p-1$, $\frac{n(p-1)}{n-p}<r<\infty$, and $\sigma \in \mathcal{M}^{+}(\R^n)$ with $\sigma \not\equiv 0$. Then the following conditions are equivalent.

(i) There exists a positive $p$-superharmonic (super) solution $u \in L^r(\R^{n})$ to \eqref{eq:p-laplacian}.

(ii) $\mathbf{K}_{1, p, q}  \sigma \in L^r(\R^n)$.

(iii) 
\begin{equation}\label{main-q} 
\sum_{Q \in \mathcal{Q}} \frac{[\varkappa(Q)]^{\frac{q r}{p-1-q}}}
{|Q|^{\frac{(n-p)r-n(p-1)}{n(p-1)}}}<\infty.
\end{equation}
Moreover, 
\begin{equation}\label{est-q}
\Vert \mathbf{K}_{1, p, q}  \sigma \Vert^r_{L^r(\R^n)} \approx \sum_{Q \in \mathcal{Q}} \frac{[\varkappa(Q)]^{\frac{q r}{p-1-q}}}
{|Q|^{\frac{(n-p)r-n(p-1)}{n(p-1)}}}, 
\end{equation}
where the constants of equivalence depend only  on $p, q, r$, and $n$. 
\end{Thm}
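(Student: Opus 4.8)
The plan is to prove the chain (iii) $\Leftrightarrow$ (ii) $\Rightarrow$ (i) $\Rightarrow$ (ii) (or (i) $\Rightarrow$ (iii)), with the main work being the equivalence of (ii) and (iii), which is precisely the promised analogue of Wolff's inequality \eqref{wolff-w} for the intrinsic potential $\mathbf{K}_{1,p,q}\sigma$. The heuristic is that $\mathbf{K}_{1,p,q}\sigma$ and its dyadic version $\mathbf{K}^d_{1,p,q}\sigma$ have equivalent $L^r$ norms, by the standard shifted-lattice averaging argument (replace $\mathcal{Q}$ by $\mathcal{Q}_t$ and average over $t \in B(0,1)$, as cited for \eqref{wolff-d}); one must check that $\varkappa(Q+t)$ behaves well enough under translations and dilations, which follows from the definition \eqref{weight-lap} and the nesting properties of the least constants $\varkappa$. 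So the crux is to show
\begin{equation}\label{plan-wolff}
\Bigl\Vert \sum_{Q\in\mathcal{Q}} \Bigl[\tfrac{\varkappa(Q)^{\frac{q(p-1)}{p-1-q}}}{|Q|^{1-\frac pn}}\Bigr]^{\frac{1}{p-1}} \chi_Q \Bigr\Vert^r_{L^r} \approx \sum_{Q\in\mathcal{Q}} \frac{[\varkappa(Q)]^{\frac{qr}{p-1-q}}}{|Q|^{\frac{(n-p)r-n(p-1)}{n(p-1)}}}.
\end{equation}
Writing $a_Q := \bigl[\varkappa(Q)^{\frac{q(p-1)}{p-1-q}}|Q|^{\frac pn - 1}\bigr]^{\frac{1}{p-1}}$, the left side is $\|\sum_Q a_Q \chi_Q\|^r_{L^r}$, and the right side is (after unraveling the exponents) $\sum_Q a_Q^r |Q|$. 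The inequality $\|\sum a_Q\chi_Q\|_{L^r}^r \ge c\sum a_Q^r|Q|$ is elementary when $r\ge 1$ (the terms are nonnegative, so on each $Q$ one has $\sum_{Q'\supseteq Q} a_{Q'} \ge a_Q$; but more care is needed for $0<r<1$ and for the reverse direction). The genuinely new ingredient flagged in the abstract is a \emph{characterization of discrete Littlewood--Paley spaces} $f^{p,q}(\sigma)$ via characteristic functions of dyadic cubes; I expect the reverse inequality $\|\sum a_Q\chi_Q\|_{L^r}^r \le C\sum a_Q^r|Q|$ to require exactly such a characterization, and this should be isolated as a separate lemma in Section \ref{sect:wolff} and invoked here. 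The condition $r > \frac{n(p-1)}{n-p}$ enters precisely to make the exponent $\frac{(n-p)r-n(p-1)}{n(p-1)}$ positive, so that the dyadic sums converge in a scale-consistent way and the Littlewood--Paley machinery applies; equivalently it is the threshold $\frac{r(p-1)}{p-1-q} > \frac{n}{n-p} \cdot \frac{p-1-q}{p-1}\cdot\frac{r(p-1)}{p-1-q}$... more simply, it guarantees $\frac{r(p-1)}{p-1-q}$ lies above the critical Sobolev-type exponent that makes $\W_{1,p}\sigma \in L^{r(p-1)/(p-1-q)}$ meaningful.

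For (ii) $\Rightarrow$ (i): given $\mathbf{K}_{1,p,q}\sigma \in L^r(\R^n)$, first I would verify that the second condition of \eqref{cond-two-sided} implies the first, i.e. $\W_{1,p}\sigma \in L^{\frac{r(p-1)}{p-1-q}}(\R^n)$, as the excerpt asserts is possible despite the differing pointwise behaviors. The mechanism should be a pointwise or integral domination: from the definition \eqref{weight-lap} of $\varkappa(B)$ and the fact that $\W_{1,p}\sigma_B$ (or its $L^{\cdot}$-norm on $B$) is controlled by $\varkappa(B)$ times an appropriate power of $|B|$, one extracts that $\bigl[\W^d_{1,p}\sigma\bigr]^{\frac{p-1}{p-1-q}}$ is, cube by cube, majorized by a constant multiple of $\mathbf{K}^d_{1,p,q}\sigma$ — or at least that their $L^r$/$L^{r(p-1)/(p-1-q)}$ norms are comparably bounded. (Testing \eqref{weight-lap} against $\varphi = \W_{1,p}\sigma_B$ itself, or against truncations, gives $\varkappa(B) \gtrsim$ something like $\bigl(\int_B (\W_{1,p}\sigma_B)^q d\sigma\bigr)^{1/q} / \|\sigma_B\|^{1/(p-1)}$, which is the kind of lower bound that converts into the needed domination.) Once both conditions in \eqref{cond-two-sided} hold, the existence of a minimal positive $p$-superharmonic solution $u$ satisfying the two-sided bound \eqref{two-sided} is quoted from \cite{CV2}, and then $u \le c[(\W_{1,p}\sigma)^{\frac{p-1}{p-1-q}} + \mathbf{K}_{1,p,q}\sigma] \in L^r$ gives $u \in L^r(\R^n)$.

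For (i) $\Rightarrow$ (ii): if some positive $p$-superharmonic supersolution $u \in L^r(\R^n)$ exists, then by the lower bound in \eqref{two-sided} applied to the minimal solution (which is dominated by $u$, by a standard comparison / sub-supersolution argument for \eqref{eq:p-laplacian} with $\sigma \not\equiv 0$ — this is where one uses that supersolutions majorize the minimal solution) we get $\mathbf{K}_{1,p,q}\sigma \le c\, u_{\min} \le c\, u \in L^r(\R^n)$, hence (ii). Here I should be a little careful that \eqref{two-sided} is stated for the minimal \emph{solution}, so I need the monotonicity/comparison step linking an arbitrary supersolution to that minimal solution; this is routine in the renormalized-solutions framework of \cite{HKM}, \cite{KKT} but should be cited explicitly. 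Finally \eqref{est-q} is just \eqref{plan-wolff} restated, so it comes for free from the (ii) $\Leftrightarrow$ (iii) step.

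\textbf{Main obstacle.} The hard part is \eqref{plan-wolff} in the direction $\|\sum_Q a_Q\chi_Q\|_{L^r}^r \le C \sum_Q a_Q^r |Q|$ for general (non-doubling) $\sigma$ and for the full range $0<r<\infty$ — the exponents $a_Q$ built from $\varkappa(Q)$ do not a priori satisfy any Carleson-type packing condition, so the elementary pointwise arguments that work for $\W^d$ (where $a_Q$ comes directly from $\sigma(Q)$ and one can telescope over chains of dyadic cubes) are not available. Overcoming this is exactly the purpose of the new characterization of the discrete Littlewood--Paley space $f^{p,q}(\sigma)$ by characteristic functions of dyadic cubes announced in the abstract, which I would develop as the technical core of Section \ref{sect:wolff} and then feed into the proof here.
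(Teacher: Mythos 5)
Your high-level plan is correct: reduce to a Wolff-type equivalence \eqref{est-q}, split into easy ($r\le1$) and hard ($r>1$) directions, use \eqref{two-sided} for (ii)$\Rightarrow$(i), and use the lower bound in \eqref{two-sided} plus domination of the minimal solution for (i)$\Rightarrow$(ii). You also correctly flag that the hard direction is $\Vert\sum_Q a_Q\chi_Q\Vert_{L^r}^r\le C\sum_Q a_Q^r|Q|$ and that the $a_Q$ built from $\varkappa(Q)$ carry no \emph{a priori} packing condition. But you stop precisely where the argument needs an idea: you write ``Overcoming this is exactly the purpose of the new characterization of the discrete Littlewood--Paley space $f^{p,q}(\sigma)$,'' as if that characterization alone delivers the inequality. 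It does not, and in fact the inequality you write down is \emph{false} for generic nonnegative coefficients when $r>1$ (take $a_Q=1$ for all dyadic $Q\subseteq[0,1]^n$ with side $\ge 2^{-N}$: then $\Vert\sum a_Q\chi_Q\Vert_{L^r}^r\sim(N+1)^r$ while $\sum a_Q^r|Q|\sim N+1$). So Theorem~\ref{theorem1-3} cannot close the argument by itself; it merely rewrites $a_1(\Lambda)=\Vert\mathbf{K}^d_{\alpha,p,q}\sigma\Vert_{L^r}^r$ as $a_4(\Lambda)$ (the case $\tau=1$), i.e.\ replaces the $L^r$ norm by a sum over $R$ of $\lambda_R|R|$ times an $(r-1)/s$-power of a localized average $\frac{1}{|R|}\int_R(\sum_{Q\subseteq R}\lambda_Q\chi_Q)^s\,dx$.

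The missing ingredient is PDE-specific and is what supplies the ``hidden Carleson structure'' of the $\lambda_Q=\varkappa(Q)^{q/(p-1-q)}|Q|^{-(n-p)/(n(p-1))}$. For each fixed dyadic $R$ one constructs the localized solution $u_R=\W_{\alpha,p}(u_R^q\,d\sigma_R)$ on $\R^n$, and invokes the two-sided estimate \eqref{est-d} from \cite{CV2}: $C\,[\kappa(Q)]^{q/(p-1-q)}\le[\int_Q u_R^q\,d\sigma]^{1/(p-1)}\le[\kappa(R)]^{q/(p-1-q)}$ for all $Q\subseteq R$. The left inequality turns $\sum_{Q\subseteq R}\lambda_Q\chi_Q$ into a lower bound by a genuine dyadic Wolff potential $\W^d_{\alpha,p}(u_R^q\,d\sigma_R)$, whose average over $R$ (with exponent $s=\min(1,p-1)$) is controlled by $\big(\sigma\text{-mass of }u_R^q\text{ on }R\big)^{s/(p-1)}|R|^{\alpha p s/(n(p-1))}$ via \cite{SV2}*{Lemma 3.1}; the right inequality of \eqref{est-d} then bounds that mass by $\kappa(R)^{q(p-1)/(p-1-q)}$, so the whole $a_4$ sum collapses exactly onto the right-hand side of \eqref{main-q}. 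This is the step your proposal lacks, and without it the claim that the Littlewood--Paley characterization ``overcomes'' the obstacle is a gap, not a proof. (Minor: the paper first reduces Theorem~\ref{thm:main-p-laplacian} to Theorem~\ref{thm:main2} via the equivalence of $\varkappa$ and $\kappa$ when $\alpha=1$, so one only works with the integral equation \eqref{eq:int2}; and \eqref{cond-two} is established via the trivial lower bound \eqref{triv} for $\kappa(B)$ plus the equivalences of \cite{HJ}, \cite{JPW}, rather than by testing $\varphi=\W_{1,p}\sigma_B$ as you sketch, though your variant is plausible.)
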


\begin{Rem}
It is easy to see that if $n \leq p < \infty$, or $1<p<n$ 
and $0<r\le \frac{n(p-1)}{n-p}$, then 
there is only a trivial nonnegative supersolution $u \in L^r(\R^n)$ to \eqref{eq:p-laplacian}. Simpler sufficient 
conditions for \eqref{main-q} in the case $1<p<n$, $\frac{n(p-1)}{n-p}<r<\infty$, 
are given in \cite{SV3}*{Theorem 1.1}. 
\end{Rem}

\begin{Rem}
A condition  equivalent  to \eqref{main-q}  can be stated in terms of 
$\varkappa(B)$ 
for balls $B=B(x, \rho)$ in place of dyadic cubes $Q$, 
\begin{equation}\label{main-b} 
\int_{\R^n} \int_0^\infty \frac{ [\varkappa(B(x, \rho))]^{\frac{q r}{p-1-q}}}{\rho^{\frac{(n-p) r}{p-1}}}
 \frac{d \rho}{\rho} dx<\infty.
\end{equation}
\end{Rem}

A necessary  (but generally not sufficient) condition 
for the existence of a nontrivial solution $u \in L^{r}(\R^n)$ to \eqref{eq:p-laplacian}   follows from \eqref{cond-two-sided},  
\begin{equation}\label{cond:dx-wolff-1} 
\W_{1,p} \sigma \in L^{\frac{r(p-1)}{p-1-q}}(\R^n). 
\end{equation}
By Wolff's inequality, \eqref{cond:dx-wolff-1}  is equivalent to the condition 
\[
\sum_{Q \in \mathcal{Q}} \frac{[\sigma(Q)]^{\frac{r}{p-1-q}}}
{|Q|^{\frac{(n- p) r-n(p-1-q)}{n(p-1-q)} }} <\infty. 
\]

\begin{Rem}\label{rem-main1}
Theorem \ref{thm:main-p-laplacian} 
holds for the $\A$-Laplacian in place of $\Delta_{p}$, under the standard structural  assumptions on $\mathcal{A}$ (see \cite{CV2}, \cite{HKM}, \cite{MZ}).
\end{Rem}

Our methods  are applicable to intrinsic nonlinear potentials of fractional order and nonlinear integral equations of the type 
\begin{equation}\label{eq:int2}
u = \W_{\alpha, p}(u^{q} d\sigma) \quad \text{in} \;\; \R^{n}.
\end{equation}
Here, a solution $u\ge 0$  is understood in the sense that
$u \in L^{q}_{loc}(\R^n, \sigma)$ satisfies \eqref{eq:int2}. In the special case $p=2$, 
this integral equation, namely  $u=\I_{2 \alpha} (u^q d \sigma)$,  is equivalent to the corresponding problem 
for the fractional Laplacian \eqref{eq:frac-laplacian}.

\begin{Thm}\label{thm:main2}
Let $0<q<p-1$, $1<p<\infty$, $0<\alpha<\frac{n}{p}$ and $\sigma \in \mathcal{M}^{+}(\R^n)$ with 
$\sigma \not\equiv 0$. Suppose that $\frac{n(p-1)}{n-\alpha p}<r<\infty$.  Then 
there exists a positive  solution $u \in L^r(\R^{n})$ to \eqref{eq:int2} if and only if 
$\mathbf{K}_{\alpha, p, q}  \sigma \in L^r(\R^n)$. Moreover,  
\begin{equation}\label{main-alpha} 
\Vert \mathbf{K}_{\alpha, p, q}  \sigma \Vert^r_{L^r(\R^n)} \approx  \sum_{Q \in \mathcal{Q}} \frac{[\kappa(Q)]^{\frac{q r}{p-1-q}}}
{|Q|^{\frac{(n-\alpha p)r-n(p-1)}{n(p-1)}}}, 
\end{equation}
where the constants of equivalence depend only  on $\alpha, p, q, r$, and $n$.

If $0<r\le \frac{n(p-1)}{n-\alpha p}$, then 
there is only a trivial supersolution to \eqref{eq:frac-laplacian}.
\end{Thm}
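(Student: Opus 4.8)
The plan is to reduce the existence statement to the sharp two-sided pointwise bounds for the minimal solution of \eqref{eq:int2}, and then to a purely dyadic characterization of the $L^{r}$-integrability of the intrinsic potential $\mathbf{K}_{\alpha,p,q}\sigma$. In Sec.~\ref{sect:wolff}, following the scheme of \cite{CV2}, \cite{CV3} but with $\W_{\alpha,p}$ in place of $\W_{1,p}$, one shows that whenever $\mathbf{K}_{\alpha,p,q}\sigma\not\equiv+\infty$ (equivalently, \eqref{suffcond1} with $\alpha p$ in place of $p$), equation \eqref{eq:int2} has a minimal positive solution $u$, produced by the monotone iteration $v\mapsto\W_{\alpha,p}(v^{q}\,d\sigma)$, and that \emph{every} positive supersolution obeys the fractional analogue of \eqref{two-sided},
\[
c^{-1}\bigl[(\W_{\alpha,p}\sigma)^{\frac{p-1}{p-1-q}}+\mathbf{K}_{\alpha,p,q}\sigma\bigr]\le u\le c\,\bigl[(\W_{\alpha,p}\sigma)^{\frac{p-1}{p-1-q}}+\mathbf{K}_{\alpha,p,q}\sigma\bigr].
\]
Granting this, a positive $u\in L^{r}(\R^{n})$ solving \eqref{eq:int2} exists if and only if $(\W_{\alpha,p}\sigma)^{\frac{p-1}{p-1-q}}+\mathbf{K}_{\alpha,p,q}\sigma\in L^{r}(\R^{n})$: the ``only if'' direction uses the lower bound $u\ge c\,\mathbf{K}_{\alpha,p,q}\sigma$, valid for any positive supersolution, and the ``if'' direction feeds the upper bound into the iteration. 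It thus remains to prove (a) that this sum lies in $L^{r}$ exactly when $\mathbf{K}_{\alpha,p,q}\sigma\in L^{r}$, and (b) the quantitative equivalence \eqref{main-alpha}.

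Step (b), the Wolff-type inequality for $\mathbf{K}_{\alpha,p,q}$, is the core. One first passes to the dyadic potential $\mathbf{K}^{d}_{\alpha,p,q}$ (cf.~\eqref{potentialK-d}), which is comparable to $\mathbf{K}_{\alpha,p,q}$ in $L^{r}$ norm by the shifted-lattice averaging used for $\W_{\alpha,p}$ in \eqref{wolff-w}, and then establishes
\[
\bigl\|\mathbf{K}^{d}_{\alpha,p,q}\sigma\bigr\|^{r}_{L^{r}(\R^{n})}\approx\sum_{Q\in\mathcal{Q}}\frac{[\kappa(Q)]^{\frac{qr}{p-1-q}}}{|Q|^{\frac{(n-\alpha p)r-n(p-1)}{n(p-1)}}}.
\]
The lower bound ``$\gtrsim$'' is elementary, since the nested sum defining $\mathbf{K}^{d}_{\alpha,p,q}\sigma$ contains, on each cube, the single term indexed by that cube. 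The upper bound ``$\lesssim$'' is the new ingredient promised in the abstract: it is equivalent to a characterization of the discrete Littlewood--Paley space $f^{p,q}(\sigma)$ via characteristic functions of dyadic cubes, and it rests on a structural property of the intrinsic weights, namely that $Q\mapsto[\kappa(Q)]^{\frac{q(p-1)}{p-1-q}}$ is quasi-superadditive over families of disjoint dyadic subcubes, so that in the summation at hand it is dominated by an honest locally finite measure. Isolating and proving this property --- i.e.\ carrying out the dyadic Carleson/Littlewood--Paley argument with coefficients $\kappa(Q)$ that are themselves defined through an optimization (the fractional analogue of \eqref{weight-lap}) rather than being additive set functions --- is the step I expect to be the main obstacle; once it is in hand, the upper bound follows by a standard dyadic summation.

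For Step (a) it suffices, by Wolff's inequality \eqref{wolff-w} for $\W_{\alpha,p}$ together with Step (b), to compare the two resulting sums term by term, which reduces to the pointwise lower bound
\[
\kappa(Q)\ge c\,\frac{[\sigma(Q)]^{1/q}}{|Q|^{\frac{n-\alpha p}{n(p-1)}}},\qquad Q\in\mathcal{Q}.
\]
This follows by testing the (fractional analogue of the) weighted inequality \eqref{weight-lap} on the single admissible function $\varphi=\W_{\alpha,p}(\delta_{x_{Q}})$, where $x_{Q}$ is the center of $Q$, for which $\varphi\gtrsim|Q|^{-\frac{n-\alpha p}{n(p-1)}}$ on $Q$; the exponents in the two sums then match exactly because $r>\frac{n(p-1)}{n-\alpha p}$. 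Combining Steps (a), (b) with the two-sided bound gives the stated equivalence and \eqref{main-alpha}.

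Finally, for $0<r\le\frac{n(p-1)}{n-\alpha p}$ the exponent $\frac{(n-\alpha p)r-n(p-1)}{n(p-1)}$ in \eqref{main-alpha} is nonpositive, so that sum diverges for every $\sigma\not\equiv0$; alternatively, any nontrivial supersolution $u$ of \eqref{eq:int2} has $\mu:=u^{q}\,d\sigma\not\equiv0$, hence $u\ge c\,\W_{\alpha,p}\mu$, and $\W_{\alpha,p}\mu(x)\gtrsim|x|^{-\frac{n-\alpha p}{p-1}}$ for $|x|$ large, which is not $r$-integrable at infinity precisely when $\frac{r(n-\alpha p)}{p-1}\le n$. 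Hence only the trivial supersolution exists, which also explains the range restriction in the first part of the theorem; the corresponding statements for the $p$-Laplacian, i.e.\ Theorem~\ref{thm:main-p-laplacian}, are recovered by taking $\alpha=1$ and invoking the Kilpel\"{a}inen--Mal\'{y} estimates \eqref{k-m}.
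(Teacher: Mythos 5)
Your outline correctly identifies the overall architecture (two-sided pointwise estimate for the minimal solution, reduction to integrability of $\mathbf{K}_{\alpha,p,q}\sigma$, passage to the dyadic potential, Wolff-type equivalence, and the divergence argument for small $r$), and the testing argument $\nu=\delta_{x_Q}$ for the lower bound $\kappa(Q)\gtrsim \sigma(Q)^{1/q}|Q|^{-(n-\alpha p)/(n(p-1))}$ is exactly the content of \eqref{triv}. But there is a genuine gap in the core step, and you flag it yourself: for the hard direction of \eqref{main-alpha} when $r>1$ you write that the upper bound ``rests on a structural property \dots\ namely that $Q\mapsto[\kappa(Q)]^{\frac{q(p-1)}{p-1-q}}$ is quasi-superadditive \dots\ so that in the summation at hand it is dominated by an honest locally finite measure,'' and then ``once it is in hand, the upper bound follows by a standard dyadic summation.'' You have not proved the property, and more importantly, even granting a measure domination $[\kappa(Q)]^{\frac{q(p-1)}{p-1-q}}\lesssim\omega(Q)$, the conclusion does \emph{not} follow by a standard dyadic summation: such a bound would control $\|\mathbf{K}^d_{\alpha,p,q}\sigma\|_{L^r}$ by $\|\W^d_{\alpha,p}\omega\|_{L^r}$, i.e.\ by a dyadic sum in $\omega(Q)$ via \eqref{wolff-w}, and one would still have to convert back to the sum in $\kappa(Q)$, which requires a reverse comparison $\omega(Q)\lesssim[\kappa(Q)]^{\frac{q(p-1)}{p-1-q}}$ that is \emph{false} in general (it only holds at the top cube for the locally defined $\omega_R$).

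The paper's actual mechanism is quite different and is the advertised novelty. It reduces $\|\mathbf{K}^d_{\alpha,p,q}\sigma\|_{L^r}$ via Theorem \ref{theorem1-3} (the new equivalence $a_1\approx a_2$ for discrete Littlewood--Paley norms, taken with $\tau=1$, $d\sigma=dx$, and a small auxiliary exponent $s$) to the quantity $a_4(\Lambda)=\sum_R\lambda_R|R|\bigl[\tfrac{1}{|R|}\int_R(\sum_{Q\subseteq R}\lambda_Q\chi_Q)^s\,dx\bigr]^{\frac{r-1}{s}}$, and then bounds the inner average \emph{cube by cube} by constructing, for each fixed dyadic $R$, the local solution $u_R=\W_{\alpha,p}(u_R^q\sigma_R)$ with $\sigma_R=\sigma|_R$. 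The two-sided comparison $[\kappa(Q)]^{\frac{q(p-1)}{p-1-q}}\lesssim\int_Q u_R^q\,d\sigma$ for $Q\subseteq R$ and $\int_R u_R^q\,d\sigma\lesssim[\kappa(R)]^{\frac{q(p-1)}{p-1-q}}$ (from \cite{CV2}), together with the averaging estimate $\tfrac{1}{|R|}\int_R[\W_{\alpha,p}(u_R^q\,d\sigma_R)]^s\,dx\lesssim\bigl(\tfrac{\int_R u_R^q\,d\sigma}{|R|^{1-\alpha p/n}}\bigr)^{s/(p-1)}$ for $s=\min(1,p-1)$ (from \cite{SV2}*{Lemma 3.1}), closes the loop. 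Neither Theorem \ref{theorem1-3} nor the local-solution trick appears in your proposal, and without some substitute for them the step $r>1$ remains open. The remaining parts of your argument (the case $0<r\le1$ by subadditivity, the necessity direction via $d\omega=u^q d\sigma$ and the testing bound for $\kappa(Q)$, and the divergence of the sum when $r\le \tfrac{n(p-1)}{n-\alpha p}$) are fine and agree with the paper.
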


In \eqref{main-alpha}, we employ the localized embedding constants $\kappa(Q)$ 
associated with certain weighted norm inequalities for potentials 
$\W_{\alpha, p}$. They are used to define the intrinsic 
potentials $\mathbf{K}_{\alpha, p, q}  \sigma$ and their dyadic analogues in the same manner as constants $\varkappa(Q)$ 
in the case $\alpha=1$ above (see Sec. \ref{sect:wolff}). 

A simple necessary, but not sufficient, condition for \eqref{main-alpha}  is given by 
\[
\sum_{Q \in \mathcal{Q}} \frac{[\sigma(Q)]^{\frac{r}{p-1-q}}}
{|Q|^{\frac{(n- \alpha p) r-n(p-1-q)}{n(p-1-q)} }} <\infty,
\]

This paper is organized as follows. 
In Sec. \ref{sect:wolff}, we give definitions of  nonlinear potentials $\mathbf{K}_{\alpha, p, q}$ and discuss some of their properties. New expressions for 
norms of sequences in discrete Littlewood--Paley spaces $\mathbf{f}^{p, q} (\sigma)$ are discussed 
in Sec. \ref{Section 3}. They are used in Sec. \ref{Section 4}, where we prove Theorem \ref{thm:main-p-laplacian}   and Theorem \ref{thm:main2}.


\section{Nonlinear potentials}\label{sect:wolff}

Havin-Maz'ya potentials $\V_{\alpha, p}\sigma$ are known to satisfy the weak maximum (or boundedness) 
principle (see \cite{AH}*{Theorem 2.6.3}). A similar weak maximum principle holds 
for Wolff potentials: \textit{If $\sigma \in \mathcal{M}^{+}(\R^n)$, then} 
\begin{equation}\label{wolff-max}
\W_{\alpha, p}\sigma(x) \leq 2^{\frac{n-\alpha p}{p-1}} \, \sup \left \{\W_{\alpha, p}\sigma(y): \, y \in \textrm{supp}(\sigma)\right\}, \quad \forall x \in \R^n.
\end{equation}

Indeed, let $K=\textrm{supp}(\sigma)$. Suppose 
$x \not\in K$, and  $x_0\in K$ minimizes the distance from $x$ to $K$. 
Then, clearly, $B(x, r)\subset B(x_0, 2r)$, for any $r>0$. Consequently, 
\[
\W_{\alpha, p}\sigma(x)\le \int_{0}^{\infty} \left[  \frac{\sigma \left( B(x_0, 2r) \right)}{r^{n-\alpha p}}  \right]^{\frac{1}{p-1}}  
\frac{dr}{r}
 = 2^{\frac{n-\alpha p}{p-1}} 
\W_{\alpha, p}\sigma(x_0).
\]

Let $1<p<\infty$, 
$0<\alpha< \frac{n}{p}$, and $0<q<p-1$. Let $\sigma \in \mathcal{M}^{+}(\R^n)$. We denote by $\kappa$ the least constant in 
the weighted norm inequality 
\begin{equation} \label{kap-global}
||\W_{\alpha, p} \nu||_{L^q(\R^n, d\sigma)} \le \kappa  \, \nu(\R^n)^{\frac{1}{p-1}}, \quad \forall \nu \in \mathcal{M}^{+}(\R^n).  
\end{equation}
We will also need a localized version of \eqref{kap-global} for $\sigma_E=\sigma|_E$, where $E$ is 
a Borel subset   of $\R^n$, and $\kappa(E)$ is the least constant in 
\begin{equation} \label{kap-local}
||\W_{\alpha, p}  \nu||_{L^q(d\sigma_{E})} \le \kappa (E) \, \nu(\R^n)^{\frac{1}{p-1}}, \quad \forall \nu \in \mathcal{M}^{+}(\R^n). 
\end{equation}
In applications, it will be enough to use $\kappa(E)$ 
where $E=B$ is a dyadic cube $Q$, or a ball in $\R^n$. 

It is easy to see using estimates \eqref{k-m} that embedding constants 
$\kappa(B)$ in the case $\alpha=1$ are equivalent to the constants 
$\varkappa(B)$ in \eqref{weight-lap}.

We define the intrinsic potential of Wolff type in terms of $\kappa(B(x, s))$, the least constant in \eqref{kap-local} with $E=B(x, s)$: 
\begin{equation} \label{intrinsic-K}
\mathbf{K}_{\alpha, p, q} \sigma (x)  =  \int_0^{\infty} \left[\frac{ \kappa(B(x, s))^{\frac{q(p-1)}{p-1-q}}}{s^{n- \alpha p}}\right]^{\frac{1}{p-1}}\frac{ds}{s}, \quad x \in \R^n. 
\end{equation} 

It is easy to see that $\mathbf{K}_{\alpha, p, q} \sigma (x)  \not\equiv \infty$ if and only if 
\begin{equation} \label{inf}
 \int_a^{\infty} \left[\frac{ \kappa(B(0, s))^{\frac{q(p-1)}{p-1-q}}}{s^{n- \alpha p}}\right]^{\frac{1}{p-1}}\frac{ds}{s}< \infty,
\end{equation} 
for any (all) $a>0$.

As in the case of Wolff potentials $\W_{\alpha, p}$, sometimes a more convenient  dyadic version of $\mathbf{K}_{\alpha, p, q}$ 
is useful:
\begin{equation} \label{intrinsic-K-d}
\mathbf{K}^d_{\alpha, p, q} \sigma (x)  = \sum_{Q \in \mathcal{Q}}  \left[ \frac{ \kappa(Q)^{\frac{q(p-1)}{p-1-q}}} { |Q|^{1-\frac{\alpha p}{n}}}\right]^{\frac{1}{p-1}}\chi_Q(x), \quad x \in \R^n. 
\end{equation} 
Similarly to \eqref{inf}, 
 $\mathbf{K}_{\alpha, p, q}\not\equiv \infty$ if and only if 
\begin{equation} \label{inf-d}
 \sum_{R\supseteq P} \left[\frac{ \kappa(R)^{\frac{q(p-1)}{p-1-q}}}{|R|^{1- \frac{\alpha p}{n}}}\right]^{\frac{1}{p-1}}< \infty,
\end{equation} 
for  $P\in \mathcal{Q}$.


\section{Equivalent norms on discrete Littlewood-Paley spaces}\label{Section 3}

In this section, we give some new equivalent norms for 
discrete Littlewood-Paley spaces with respect to an arbitrary 
measure $\sigma\in \mathcal{M}^{+}(\R^n)$ (see \cite{CV}, \cite{FJ}), \cite{HV1}). 
In this paper, we will need them only in the case of Lebesgue measure, but a more general 
setup is useful in various applications in harmonic analysis and PDE (\cite{COV1}--\cite{COV3}, \cite{HV1}, \cite{HV2}). In particular, they give new characterizations 
of the discrete Carleson embedding theorem in the case $0<q<1<p$ (see 
Corollary \ref{cor-3} below).

 Let $\sigma \in \mathcal{M}^{+}(\R^n)$.  We use the notation 
 $|E|_\sigma=\sigma(E)$, for Borel sets 
$E\subset \R^n$; $|E|$ stands for Lebesgue measure of $E$. 

Let $\Lambda= (\lambda_Q)_{Q \in {\mathcal{Q}}}$ be a sequence 
of nonnegative reals.  We denote 
by $\mathcal D$ the collection of all dyadic cubes $Q\in \mathcal{Q}$ such that 
$|Q|_\sigma \not= 0$. 

For $0< \tau < \infty$, $0<r<\infty$, and $-\infty<q \le \infty$ ($q\not=0$), we set  
\begin{equation}\label{l1-3}
a_1(\Lambda) = 
\int_{\R^n} \Big ( \sum_{Q \in \mathcal D} \lambda_Q \chi_Q \Big )^{r} d \sigma,
\end{equation}
\begin{equation}\label{l2-3}
a_2(\Lambda) = \int_{\R^n}  \Big [ \sum_{R \in \mathcal D} \lambda_R \chi_R 
\Big ( \frac{1}{|R|_\sigma} 
\int_R ( \sum_{Q \subseteq R} \lambda_Q \chi_Q )^{q} d \sigma\Big )^{{\frac{1}{q}} (\frac{r}{\tau}-1)} 
\Big ]^{\tau} d \sigma. 
\end{equation}

For $0<r<1$, we set 
\begin{equation}\label{l3-3c} 
 a_3(\Lambda) = \sup   \Big\{ \sum_{R \in \mathcal D} \lambda_R^r \, \nu_R \Big\},
 \end{equation}
where the supremum   is taken over 
 all sequences  of nonnegative reals $\nu=(\nu_R)$ such that $\nu_Q=0$ if $|Q|_\sigma=0$, and 
\begin{equation}\label{l3-3carl}
\sum_{Q \subseteq P} \Big(\frac{\nu_Q}{|Q|_\sigma}\Big)^{\frac{1}{1-r}} |Q|_\sigma 
\le  |P|_\sigma,  \quad \forall P \in \mathcal D   
\Big\}.  
\end{equation} 
In other words, the supremum on the right-hand side of  \eqref{l3-3c}  is taken over 
 all Carleson sequences $\nu=(\nu_R)$ such that $||\nu||_{f^{\infty, \frac{1}{1-r}}_0(\sigma)} \le 1$. 
 
 For $-\infty<r<\infty$ and $-\infty <q <\infty$ ($q\not=0$), we set 
\begin{equation}\label{l4-3}
a_4(\Lambda) = \sum_{R \in \mathcal D} \lambda_R \, |R|_\sigma  \,
 \Big ( \frac{1}{|R|_\sigma} 
\int_R ( \sum_{Q \subseteq R} \lambda_Q \chi_Q )^{q} d \sigma
\Big )^{\frac{r-1}{q}}. 
\end{equation}
We observe that $a_4(\Lambda)$ coincides with $a_2(\Lambda)$  in the special case 
$\tau=1$, $r>0$.

The following duality lemma is known in the case $0<r<1$ (see \cite{CV}, \cite{HV1}).

\begin{Lem}\label{duality} Suppose $0<r<1$. There exists a positive constant $C$ depending only on $r$, $q$, and $\tau$ such that 
\begin{equation}\label{l2-3ca}
 \frac{1}{C} \, a_3(\Lambda) \le \, a_1(\Lambda) \le  C \, a_3(\Lambda).  
\end{equation}
 \end{Lem}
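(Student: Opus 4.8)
The plan is to establish the two-sided inequality
\[
\frac{1}{C}\,a_3(\Lambda)\le a_1(\Lambda)\le C\,a_3(\Lambda)
\]
by a duality argument for the (quasi-)norm $\|\cdot\|_{L^r(\sigma)}$ with $0<r<1$, exploiting the ``$\ell^r$ vs.\ Carleson-type $f^{\infty,\frac{1}{1-r}}_0$'' pairing. First I would rewrite $a_1(\Lambda)=\|F\|_{L^r(\sigma)}^r$ with $F=\sum_{Q\in\mathcal D}\lambda_Q\chi_Q$. The key analytic input is the well-known variational formula: for a nonnegative function $F$ and $0<r<1$,
\[
\int F^r\,d\sigma \approx \sup_g \int F g\,d\sigma,
\]
where the supremum runs over $g\ge 0$ with $\|g\|_{L^{(r)'}(\sigma)}\le 1$ in the appropriate reverse-H\"older (concave duality) sense; here $(r)'=\frac{r}{r-1}<0$, so one must use the duality of $L^r$ with the cone of functions normalized by $\int g^{\frac{r}{r-1}}\,d\sigma\le 1$ (equivalently $\left(\int g^{-\frac{r}{1-r}}\,d\sigma\right)^{-\frac{1-r}{r}}\ge 1$). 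This is classical (e.g. from the Maz'ya--Verbitsky circle of ideas, and is exactly the form used in \cite{CV}, \cite{HV1}).

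Next I would discretize. Given such a test function $g$, integrate $Fg=\sum_{Q}\lambda_Q\int_Q g\,d\sigma$ and set $\nu_Q:=\frac{1}{|Q|_\sigma}\int_Q g\,d\sigma\cdot|Q|_\sigma=\int_Q g\,d\sigma$ — more precisely, after normalizing, one shows that the sequence $\nu_Q$ built from $g$ satisfies the Carleson condition \eqref{l3-3carl} with constant $\lesssim 1$, using H\"older's inequality on each cube $P$ together with the normalization of $g$: the sum $\sum_{Q\subseteq P}\left(\frac{\nu_Q}{|Q|_\sigma}\right)^{\frac{1}{1-r}}|Q|_\sigma$ is controlled by $\int_P g^{\frac{1}{1-r}\cdot(\text{something})}$ and then by $|P|_\sigma$. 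This gives the direction $a_1(\Lambda)\le C\,a_3(\Lambda)$, because for each admissible $\nu$ one has $\sum_R\lambda_R^r\nu_R\le$ (a multiple of) $a_1(\Lambda)$ after converting $\sum_R\lambda_R^r\nu_R$ back into an integral $\int Fg\,d\sigma$ via a ``stopping time'' / principal-cube decomposition so that $\sum_R\lambda_R^r\nu_R\lesssim\int(\sum\lambda_Q\chi_Q)^r\,d\sigma$; conversely, for the optimal $g$ one recovers $a_1$ up to constants, yielding $a_3(\Lambda)\ge\frac1C a_1(\Lambda)$.

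For the reverse inequality $a_1(\Lambda)\le C\,a_3(\Lambda)$ — i.e.\ producing a single good Carleson sequence $\nu$ from $\Lambda$ — I would run the standard construction of principal (sparse) cubes: let $\mathcal S\subset\mathcal D$ be the maximal cubes where the local average $\frac{1}{|Q|_\sigma}\int_Q F\,d\sigma$ roughly doubles, so that $F$ is pointwise comparable to the sparse sum $\sum_{S\in\mathcal S}a_S\chi_S$ with $a_S\approx\frac{1}{|S|_\sigma}\int_S F\,d\sigma$ and the $\{S\}$ are a Carleson family. Then set $\nu_S:=\frac{F_S^{r-1}|S|_\sigma\,a_S}{(\text{normalization})}$ (the exponent chosen so that the Carleson test \eqref{l3-3carl} becomes the sparseness of $\mathcal S$), $\nu_Q=0$ otherwise; a short computation shows $\sum_S\lambda_S^r\nu_S\gtrsim\int F^r\,d\sigma$. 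Combining the two directions gives \eqref{l2-3ca}.

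The main obstacle will be the bookkeeping in the principal-cube / sparse decomposition: making sure the constructed $\nu$ genuinely satisfies the normalized Carleson condition \eqref{l3-3carl} for \emph{all} $P\in\mathcal D$ (not just the principal ones), with a constant depending only on $r$ (and at worst $q,\tau$, which actually play no role for $a_1$ and $a_3$ themselves), and that the exponents $\frac{1}{1-r}$ and $\frac{r-1}{q}$ track correctly through H\"older's inequality. A secondary technical point is handling cubes with $|Q|_\sigma=0$ (excluded via $\mathcal D$) and the quasi-norm nature of $L^r$, $r<1$, which forces the use of the ``reverse'' duality rather than ordinary $L^p$--$L^{p'}$ duality; but since the lemma is stated as known (\cite{CV}, \cite{HV1}), I expect the proof to be a streamlined recollection of that argument rather than anything new.
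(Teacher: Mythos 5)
The paper does not actually give a proof of Lemma~3.1: it states it as a known result and points the reader to \cite{CV} and \cite{HV1}. So there is no ``paper's own proof'' to compare against; what you propose is a sketch of a proof that the paper chose to omit. Your overall orientation is reasonable --- the cited references do establish this equivalence through duality between discrete $L^{r}$-type quantities and Carleson-type sequences, with stopping-time/principal-cube decompositions --- but the sketch as written has genuine gaps.

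The central error is in the ``key analytic input.'' For $0<r<1$ the extremal formula coming from reverse H\"older is an \emph{infimum}, not a supremum: $\left(\int F^{r}\,d\sigma\right)^{1/r}=\inf\{\int Fg\,d\sigma: g\ge 0,\ \int g^{r/(r-1)}\,d\sigma=1\}$, attained at $g\propto F^{r-1}$. Since $a_{3}(\Lambda)$ is defined as a \emph{supremum} over Carleson sequences, the two objects do not line up as simply as ``both sides of the same duality''; you would need to explain precisely how the infimum description of $a_{1}$ turns into the supremum description $a_{3}$, and in particular why the specific nonlinear pairing $\sum_{R}\lambda_{R}^{r}\nu_{R}$ (with the $r$-th power on $\lambda_{R}$, not on the pairing) arises. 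As a side effect of this confusion, your two directions get relabelled: the paragraph that claims to prove $a_{1}\le C\,a_{3}$ actually argues $\sum_{R}\lambda_{R}^{r}\nu_{R}\lesssim a_{1}$ for each admissible $\nu$, which is $a_{3}\le C\,a_{1}$; and the ``reverse'' inequality named in the next paragraph is the same one.

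Beyond the sup/inf issue, the two genuinely substantive steps are both left unverified. First, the assertion that $\nu_{Q}=\int_{Q}g\,d\sigma$ (or $\nu_{Q}\approx\lambda_{Q}^{1-r}\int_{Q}F^{r-1}\,d\sigma$, after the $\lambda^{r}$-pairing is accounted for) satisfies the Carleson normalization \eqref{l3-3carl} with a constant depending only on $r$ is not at all obvious: the test is $\sum_{Q\subseteq P}(\nu_{Q}/|Q|_{\sigma})^{1/(1-r)}|Q|_{\sigma}\le|P|_{\sigma}$, and checking it from the normalization $\int g^{r/(r-1)}\,d\sigma=1$ requires a genuine argument (a localized Carleson embedding), not just ``H\"older's inequality on each cube.'' Second, the stopping-time/sparse decomposition needed to convert $\sum_{R}\lambda_{R}^{r}\nu_{R}$ into $\int F^{r}\,d\sigma$, and conversely, is precisely the part of the argument with content; saying ``a short computation shows $\sum_{S}\lambda_{S}^{r}\nu_{S}\gtrsim\int F^{r}\,d\sigma$'' elides it. Finally, the mention of the exponent $\tfrac{r-1}{q}$ is a red herring: $q$ (and $\tau$) do not appear in either $a_{1}$ or $a_{3}$, so the constant here genuinely depends on $r$ alone --- a point you notice, but which makes the exponent-tracking remark misleading. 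In short, the plan identifies the correct circle of ideas but does not yet constitute a proof.
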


\begin{Thm}\label{theorem1-3}
Let  $0<q <r< \infty$, and $0< \tau <\infty$. 
Then there exists a positive constant $C$ depending only on $r$, $q$, and $\tau$ such that 
\begin{equation}\label{l2-3a}
 \frac{1}{C} \, a_2(\Lambda) \le \, a_1(\Lambda) \le  C \, a_2(\Lambda), 
\end{equation}
\end{Thm}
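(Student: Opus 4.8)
The plan is to prove the two inequalities in \eqref{l2-3a} separately, reducing both to pointwise/stopping-time estimates on the dyadic tree. Write $f = \sum_{Q \in \mathcal D} \lambda_Q \chi_Q$, so $a_1(\Lambda) = \int_{\R^n} f^r \, d\sigma$, and for each $R \in \mathcal D$ put $A_R := \frac{1}{|R|_\sigma}\int_R (\sum_{Q \subseteq R} \lambda_Q \chi_Q)^q \, d\sigma$, the local $L^q(\sigma)$-average of the ``tail from $R$ down.'' Then $a_2(\Lambda) = \int_{\R^n} \big(\sum_{R \in \mathcal D} \lambda_R \chi_R \, A_R^{\frac{1}{q}(\frac{r}{\tau}-1)}\big)^{\tau} d\sigma$. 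The first observation is that it suffices to treat the case $\tau = 1$: for general $\tau$ one applies the $\tau=1$ result to the exponent $r/\tau > q/\tau$ hm — actually cleaner is to note that raising the inner sum to the power $\tau$ and using that $t \mapsto t^{1/\tau}$ interacts with the sum via quasi-subadditivity only one way, so I would instead directly compare $a_2(\Lambda)$ for parameter $\tau$ with $a_4$-type quantities; since the remark after \eqref{l4-3} records that $a_2 = a_4$ when $\tau = 1$, the core case is $a_1 \approx a_4$ with $0 < q < r$, and the passage to general $\tau$ is handled by the elementary inequality relating $\int (\sum b_R \chi_R)^\tau d\sigma$ to $\sum b_R^{?} |R|_\sigma$ after a stopping-time decomposition. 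So the heart of the matter is:
\[
\int_{\R^n} f^r \, d\sigma \;\approx\; \sum_{R \in \mathcal D} \lambda_R \, |R|_\sigma \, A_R^{\frac{r-1}{q}}.
\]

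For the lower bound $a_4 \lesssim a_1$ (equivalently $a_2 \lesssim a_1$), the key step is a telescoping/summation-by-parts identity on the dyadic tree. One writes $f$ at a cube $R$ as the partial sum over ancestors plus $\sum_{Q \subseteq R}\lambda_Q \chi_Q$, and uses that $\lambda_R \le \big(\sum_{Q \subseteq R}\lambda_Q\chi_Q\big)$ pointwise on $R$; combined with the definition of $A_R$ and Hölder (since $q < r$), each term $\lambda_R |R|_\sigma A_R^{(r-1)/q}$ is dominated by $\int_R \big(\sum_{Q\subseteq R}\lambda_Q\chi_Q\big)^r d\sigma$ up to rearrangement, and then a greedy/stopping-time selection of generations ensures these integrals have bounded overlap and sum to at most $C\int f^r d\sigma$. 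Here the hypothesis $q < r$ is essential so that $A_R^{(r-1)/q}$ is an increasing function of the local average with a power $\ge$ that produced by Hölder.

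For the upper bound $a_1 \lesssim a_4$, I would run a Calderón–Zygmund stopping-time argument on $f$ with respect to $\sigma$: fix a large constant $a > 1$ and, for each cube, select its maximal dyadic descendants where the $\sigma$-average of $f$ (or of $\sum_{Q \subseteq \cdot}\lambda_Q\chi_Q$) first exceeds $a$ times its value at the parent generation. On the ``sparse'' family $\mathcal S$ of stopping cubes one gets the standard estimate $\int f^r d\sigma \lesssim \sum_{S \in \mathcal S} (\text{average of } f \text{ over } S)^r |S|_\sigma$ with geometric decay in generations, and then one identifies each such average-to-the-$r$ term with a piece of $\lambda_S |S|_\sigma A_S^{(r-1)/q}$ by peeling off $\lambda_S$ and recognizing the remaining $A_S^{(r-1)/q} \approx (\text{average})^{r-1}$ up to the $q<r$ Hölder comparison; summing over $\mathcal S$ and over base cubes recovers $a_4(\Lambda)$. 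The general $\tau$ case is then obtained by the same stopping-time skeleton, replacing the outer power $1$ by $\tau$ and tracking the exponent $\frac{1}{q}(\frac{r}{\tau}-1)$ through the averages — the bookkeeping changes but the mechanism is identical.

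The main obstacle I anticipate is the upper bound: making the Hölder comparison $A_S^{(r-1)/q} \approx (\text{$\sigma$-average of the local sum over }S)^{r-1}$ genuinely two-sided on the stopping cubes. One direction ($A_S^{(r-1)/q} \lesssim (\text{average})^{r-1}$ when $r > 1$, reversed when $r<1$) needs the stopping condition to control how much larger the $L^q$-average can be than a suitable power of the full sum, and this is exactly where the choice of the stopping exponent and the role of $q < r$ (versus the borderline $q=r$, excluded) become delicate; I expect to need a separate short lemma handling $0<r\le 1$ and $r>1$ with the duality Lemma \ref{duality} doing the work in the range $0<r<1$.
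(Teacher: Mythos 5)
Your reduction to the case $\tau=1$ is the right first step and does match the paper's architecture: as noted after \eqref{l4-3}, $a_2$ with $\tau=1$ is exactly $a_4$, and the paper's Lemma \ref{lemma-2} establishes $a_1 \approx a_4$ for $0<q<r$. However, there are two genuine gaps in the proposal.

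First, even for the core equivalence $a_1 \approx a_4$, your sketch and the paper's proof are genuinely different, and your sketch is not carried out to a point where it could be checked. The paper does not use a Calder\'on--Zygmund stopping-time / sparse-family decomposition. For $a_4 \lesssim a_1$ the paper uses Jensen's inequality to reduce to $q=-(1-r)$ and then a single summation-by-parts (for $0<r<1$), or the $L^{r/q}(\sigma)$-boundedness of the dyadic maximal operator $M_\sigma$ (for $r\ge 1$). For $a_1 \lesssim a_4$ the paper uses H\"older's inequality, \eqref{rest1-3}, \eqref{rest4-3} and an \emph{absorption} step of the form $a_1 \le C\,a_4^{1/s'} a_1^{1/s}$, rather than a pointwise two-sided comparison of $A_S^{(r-1)/q}$ with an average over stopping cubes. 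You correctly flag that making such a comparison two-sided on stopping cubes is ``the main obstacle,'' but that is precisely the step you do not resolve; the paper sidesteps it by not attempting a pointwise identification at all.

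Second, and more seriously, the passage from $\tau=1$ to general $\tau$ is where the bulk of the work in the paper lies, and your treatment of it (``the bookkeeping changes but the mechanism is identical'') is not accurate. General $\tau$ is \emph{not} obtained by running the $\tau=1$ stopping-time skeleton with a different exponent. The paper needs four distinct regimes, handled by genuinely different tools: for $0<\tau\le r<1$ it invokes the Carleson-sequence duality Lemma \ref{duality} together with H\"older and Lemma \ref{lemma-2}(i); for $0<\tau<1\le r$ it uses summation by parts, Jensen, and Kolmogorov's weak maximal inequality $M_\sigma^R:L^1(\omega)\to L^\epsilon(\omega)$; for $q<r<\tau$ it uses $M_\sigma$ in $L^{r/q}(\sigma)$ with an absorption step; and for $1\le\tau<r$ it uses Lemma \ref{lemma-2}(ii), an interpolation inequality for the local averages, and a second absorption step. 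These cases (Lemmas \ref{lemma0} and \ref{lemma-3} and Corollary \ref{corollary-3}) are not ``bookkeeping'' on top of $\tau=1$; they are the substance of the theorem, and a proof that does not engage with the $\tau\lessgtr 1$, $\tau\lessgtr r$ dichotomies cannot be considered complete. Until you actually carry out the stopping-time construction and show how the exponent $\frac{1}{q}(\frac{r}{\tau}-1)$ survives it in each of these regimes, the proposal remains an outline rather than a proof.
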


Theorem \ref{theorem1-3} is a consequence of the lemmas proved below.

The following corollary is immediate from Theorem \ref{theorem1-3}. 
\begin{Cor}\label{cor-3} Let $\sigma \in M^+(\R^n)$. Suppose $0<q<1<p<\infty$. Then the following statements are equivalent. 

(i) The ``one-weight'' inequality holds, 
$$
|| \sum_{Q \in \mathcal D} \lambda_Q \, \chi_Q \, \frac{1}{|Q|_\sigma} \int_Q |f| \, d \sigma||_{L^q(\sigma)} \le C \, ||f||_{L^p(\sigma)}, 
$$
for all $f \in L^p(\sigma)$.

(ii) $\sigma$ satisfies the condition
$$
|| \sum_{Q \in \mathcal D} \lambda_Q \, \chi_Q \, ||_{L^r(d \sigma)} < \infty, 
$$
where $r=\frac{pq}{p-q}$. 

(iii) $\sigma$ satisfies the condition
$$
|| \sum_{Q \in \mathcal D} \lambda_Q \, \chi_Q \, \Big(\frac{1}{|Q|_\sigma} \int_Q \rho_Q^q  d \sigma\Big)^{\frac{p'-1}{q}}||_{L^\tau(d \sigma)} < \infty,
$$
where $\tau = \frac{q(p-1)}{p-q}$, and 
\begin{equation}\label{rho}
\rho_Q(x) = \sum_{S \subseteq Q} \lambda_S \, \chi_S(x).
\end{equation}

\end{Cor}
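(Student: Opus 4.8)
\textbf{Proof proposal for Corollary \ref{cor-3}.}

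The plan is to read off the three equivalences from Theorem \ref{theorem1-3} together with the duality Lemma \ref{duality}, after translating the ``one-weight'' testing inequality in (i) into the language of the functionals $a_1, a_2$. First I would record the standard fact that the norm of the sublinear averaging operator
\[
f \mapsto \sum_{Q \in \mathcal D} \lambda_Q \, \chi_Q \, \frac{1}{|Q|_\sigma} \int_Q |f| \, d\sigma
\]
from $L^p(\sigma)$ to $L^q(\sigma)$, with $0<q<1<p$, is dual (via the standard $L^q$--$L^{(q/ \cdot)'}$ duality, valid for $q<1$ with the appropriate conjugate exponent) to a Carleson-type condition on the coefficients. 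Concretely, testing (i) on indicator functions $f=\chi_P$ and using that $\frac{1}{|Q|_\sigma}\int_Q \chi_P\, d\sigma = 1$ for $Q\subseteq P$ shows (i) forces a Carleson condition; conversely, by the dyadic Carleson embedding theorem (in its $0<q<1$ form, i.e. precisely the statement encoded by $a_3$ and the constraint \eqref{l3-3carl}), that Carleson condition is also sufficient. This identifies (i) with the finiteness of $a_3(\Lambda)$ for a suitable choice of the parameter $r$: one checks the bookkeeping of exponents and finds $r=\tfrac{pq}{p-q}$, which is exactly the exponent appearing in (ii). This is the step I expect to be the main obstacle, since the $q<1$ duality and the correct identification of the Carleson exponent $\tfrac{1}{1-r}$ with $p'$-type quantities require care; everything else is substitution.

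Next I would invoke Lemma \ref{duality} with this value of $r$ (note $0<r=\tfrac{pq}{p-q}<1$ precisely when $0<q<1<p$, so the hypothesis of the lemma is met), which gives
\[
a_3(\Lambda) \approx a_1(\Lambda) = \int_{\R^n}\Big(\sum_{Q\in\mathcal D}\lambda_Q\chi_Q\Big)^r d\sigma = \Big\|\sum_{Q\in\mathcal D}\lambda_Q\chi_Q\Big\|_{L^r(\sigma)}^r.
\]
Thus (i) $\Leftrightarrow$ $a_1(\Lambda)<\infty$ $\Leftrightarrow$ (ii), which settles the equivalence of the first two statements.

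Finally, for (ii) $\Leftrightarrow$ (iii) I would apply Theorem \ref{theorem1-3}. With $r=\tfrac{pq}{p-q}$ and $\tau=\tfrac{q(p-1)}{p-q}$ one verifies $0<q<r<\infty$ and $0<\tau<\infty$, so the theorem applies and yields $a_1(\Lambda)\approx a_2(\Lambda)$. It then remains only to match $a_2(\Lambda)$ with the quantity in (iii): unwinding \eqref{l2-3} with this $\tau$, the inner factor carries the exponent $\tfrac{1}{q}(\tfrac{r}{\tau}-1)$, and a direct computation gives $\tfrac{r}{\tau}-1 = \tfrac{p-1}{q(p-1)}\cdot\text{(something)}$ — more precisely $\tfrac{r}{\tau} = \tfrac{p}{p-1} = p'$, so $\tfrac{1}{q}(\tfrac{r}{\tau}-1) = \tfrac{p'-1}{q}$, which is exactly the exponent in (iii), with $\rho_Q$ as in \eqref{rho} playing the role of $\sum_{S\subseteq Q}\lambda_S\chi_S$. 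Hence $a_2(\Lambda)<\infty$ is literally the condition in (iii), and the chain (i) $\Leftrightarrow$ (ii) $\Leftrightarrow$ (iii) is complete. The only genuinely nontrivial input is Theorem \ref{theorem1-3} and Lemma \ref{duality} themselves; the corollary is just the correct choice of the three parameters $r,\tau$ and the exponent arithmetic.
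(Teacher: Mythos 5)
Your handling of (ii) $\Leftrightarrow$ (iii) is correct and is exactly the content of Theorem~\ref{theorem1-3}: with $r=\frac{pq}{p-q}$, $\tau=\frac{q(p-1)}{p-q}$ one indeed has $0<q<r$, $0<\tau<\infty$, $r/\tau=p'$, and the inner exponent $\frac{1}{q}(r/\tau-1)=\frac{p'-1}{q}$ matches the display in (iii); this is what the paper means by ``immediate from Theorem~\ref{theorem1-3}.'' However, your treatment of (i) $\Leftrightarrow$ (ii) has two genuine problems. First, the claim that $0<r=\frac{pq}{p-q}<1$ whenever $0<q<1<p$ is false: $r<1$ requires $q<\frac{p}{p+1}$, and e.g.\ $q=0.9$, $p=2$ gives $r=18/11>1$. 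So Lemma~\ref{duality} (which needs $0<r<1$) is simply not available in general, and the $a_3$-based route breaks down.

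Second, and more substantively, the proposed identification ``(i) $\Leftrightarrow$ $a_3(\Lambda)<\infty$'' is not established and is in effect circular. By Lemma~\ref{duality} (when it applies), $a_3(\Lambda)<\infty$ is the \emph{same} thing as $a_1(\Lambda)<\infty$, i.e.\ condition (ii). The Carleson condition \eqref{l3-3carl} that appears in $a_3$ is a constraint on the \emph{dual} sequence $\nu$ over which one takes the supremum; it is not the testing condition you get from applying (i) to $f=\chi_P$, which reads $\int_P\rho_P^q\,d\sigma\le C\,|P|_\sigma^{q/p}$. Conflating these two is where the argument loses the thread: saying ``the dyadic Carleson embedding theorem in its $0<q<1$ form is precisely the statement encoded by $a_3$'' presupposes exactly the equivalence (i) $\Leftrightarrow$ (ii) you are trying to prove. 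In fact the paper takes (i) $\Leftrightarrow$ (ii) as known — this is the discrete Carleson embedding theorem for positive dyadic operators in the regime $0<q<1<p$, treated in the cited works of Cohn--Verbitsky, Quinn--Verbitsky, and H\"anninen--Verbitsky; the easy direction (ii) $\Rightarrow$ (i) is H\"older with exponents $r/q$ and $p/q$ combined with the $L^p(\sigma)$-boundedness of $M_\sigma$, while (i) $\Rightarrow$ (ii) requires a genuine argument beyond testing on indicators. What is actually new in Corollary~\ref{cor-3}, and what the paper derives from Theorem~\ref{theorem1-3}, is the equivalent formulation (iii).
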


We now prove a series of lemmas used in the proof of Theorem \ref{theorem1-3}. 
Some of them might be of independent interest.

\begin{Lem}\label{lemma-2} Let $\sigma \in M^+(\R^n)$, and let $-\infty<q\le \infty$.

(i) If  either $0< r < 1$ and  $-(1-r) \le q <\infty$, or $r \ge 1$ 
and $0<q<r<\infty$, 
then 
\begin{equation}\label{l3-3e} 
 a_4(\Lambda) \le  C \, a_1(\Lambda), 
\end{equation} 
where $C$ is a positive constant depending only on $r$.

(ii) If either $-\infty<q<r<1$, or $r\ge 1$ and $0<q<\infty$, then the converse inequality holds: 
\begin{equation}\label{l3-3f} 
 a_1 (\Lambda) \le  C \, a_4(\Lambda), 
\end{equation} 
where $C$ is a positive constant depending only on $r$ and $q$. 
\end{Lem}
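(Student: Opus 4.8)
The plan is to reduce everything to the single function $f=\sum_{Q\in\mathcal{D}}\lambda_Q\chi_Q$ together with its truncations $\rho_R=\sum_{Q\subseteq R}\lambda_Q\chi_Q$, which satisfy $0\le\rho_R\le f$ on $R$ and $\sum_{R\ni x}\lambda_R=f(x)$. The first step is a one-dimensional summation-by-parts estimate along dyadic chains: if $\cdots\subset Q_{-1}(x)\subset Q_0(x)\subset Q_1(x)\subset\cdots$ is the chain of dyadic cubes through $x$, then for every $s>0$
\[
c_s\,f(x)^s\ \le\ \sum_{R\ni x}\lambda_R\,\rho_R(x)^{s-1}\ \le\ C_s\,f(x)^s,
\]
which follows by comparing $\sum_k\bigl(\rho_{Q_k}(x)-\rho_{Q_{k-1}}(x)\bigr)\,\rho_{Q_k}(x)^{s-1}$ with $\int_0^{f(x)}u^{s-1}\,du$ (only $s-1>-1$ is used). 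Integrating in $d\sigma$ with $s=r$ gives
\[
a_1(\Lambda)\ \approx\ a_5(\Lambda):=\sum_{R\in\mathcal{D}}\lambda_R\int_R\rho_R^{\,r-1}\,d\sigma,
\]
with constants depending only on $r$; note that $a_5(\Lambda)$ is exactly $a_4(\Lambda)$ with the inner $q$-average of $\rho_R$ over $R$ replaced by its $(r-1)$-average.

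For part (i) I would treat $r\ge1$ and $0<r<1$ separately. If $r\ge1$ and $0<q<r$, use $\rho_R\le f$ and $\frac{1}{|R|_\sigma}\int_R f^q\,d\sigma\le M^d_\sigma(f^q)(x)$ for $x\in R$, where $M^d_\sigma$ is the dyadic maximal operator; since $\frac{r-1}{q}\ge0$ this and $\sum_{R\ni x}\lambda_R=f(x)$ give
\[
a_4(\Lambda)\ \le\ \int_{\R^n}f\cdot\bigl(M^d_\sigma(f^q)\bigr)^{\frac{r-1}{q}}\,d\sigma,
\]
and H\"older with exponents $r$ and $r'=\frac{r}{r-1}$, together with the $L^{r/q}(\sigma)$-boundedness of $M^d_\sigma$ (valid because $r/q>1$), yields $a_4(\Lambda)\le C\,a_1(\Lambda)$ (for $r=1$ this is already an identity). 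If $0<r<1$ one cannot replace $\rho_R$ by $f$, since $\frac{r-1}{q}$ then has the wrong sign; instead compare $a_4$ and $a_5$ termwise, using that $p\mapsto\bigl(\frac{1}{|R|_\sigma}\int_R\rho_R^{\,p}\,d\sigma\bigr)^{1/p}$ is nondecreasing: the hypothesis $q\ge-(1-r)=r-1$ together with $r-1<0$ forces $\bigl(\frac{1}{|R|_\sigma}\int_R\rho_R^{\,q}\,d\sigma\bigr)^{\frac{r-1}{q}}\le\frac{1}{|R|_\sigma}\int_R\rho_R^{\,r-1}\,d\sigma$, hence $a_4(\Lambda)\le a_5(\Lambda)\approx a_1(\Lambda)$. (Cubes with $|R|_\sigma=0$ or $\lambda_R=0$ contribute nothing and are ignored.)

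For part (ii) one uses $a_1\approx a_5$ and aims at $a_5(\Lambda)\le C\,a_4(\Lambda)$. The same termwise power-mean comparison, with the monotonicity now pointing the other way, disposes of the ranges $r\ge1,\ q\ge r-1$ and $0<r<1,\ q\le r-1$. The remaining ranges — $r>1$ with $0<q<r-1$, and $0<r<1$ with $r-1<q<r$, $q\ne0$ — are genuinely harder, because termwise $\frac{1}{|R|_\sigma}\int_R\rho_R^{\,r-1}\,d\sigma$ may dominate $\bigl(\frac{1}{|R|_\sigma}\int_R\rho_R^{\,q}\,d\sigma\bigr)^{(r-1)/q}$. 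Here I would run a stopping-time / Calder\'on--Zygmund decomposition exploiting the self-similar structure (each $\rho_R$ restricted to $R$ is again a sum of the same type over the dyadic subtree of $R$): inside $R$ one stops at the maximal dyadic $Q\subsetneq R$ on which the $q$-average of $\rho_Q$ leaves a fixed dyadic corridor around $\bigl(\frac{1}{|R|_\sigma}\int_R\rho_R^{\,q}\,d\sigma\bigr)^{1/q}$; on the non-stopped part of $R$ the function $\rho_R$ is pinned near the stopping level, so Chebyshev's inequality controls that portion of $\int_R\rho_R^{\,r-1}\,d\sigma$ by the corresponding $a_4$-summand, while the stopped cubes are charged to their own $a_4$-terms, the corridor being chosen to produce geometric decay through the iteration. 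For $0<r<1$ one may instead invoke the known duality $a_1(\Lambda)\approx a_3(\Lambda)$ (Lemma \ref{duality}) and bound $a_3$ against $a_4$ by testing against Carleson sequences $\nu$ with $\|\nu\|_{f^{\infty,\frac{1}{1-r}}_0(\sigma)}\le1$, which after H\"older reduces to a Carleson-embedding estimate.

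The routine inputs are the summation-by-parts identity, Jensen's and H\"older's inequalities, and the dyadic maximal bound. The one place where real work is unavoidable is the global step in part (ii) for the ranges where the termwise comparison collapses: the difficulty is to organize the stopping cubes so that the ``overshoot'' of $\int_R\rho_R^{\,r-1}\,d\sigma$ over the $q$-average on each $R$ is absorbed exactly once by the contributions of its descendants, and I expect this bookkeeping to be the main obstacle.
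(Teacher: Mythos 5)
Your reformulation $a_1(\Lambda)\approx a_5(\Lambda):=\sum_{R\in\mathcal D}\lambda_R\int_R\rho_R^{\,r-1}\,d\sigma$ (with constants depending only on $r$, for every $r>0$) is correct and is exactly the summation-by-parts estimate \eqref{rest1-3} that the paper uses; observing that $a_5$ is $a_4$ evaluated at $q=r-1$ is a clean way to package the power-mean comparisons. Your proof of part (i) is essentially the paper's: for $0<r<1$ the termwise inequality with $q\ge -(1-r)=r-1$ gives $a_4\le a_5\le C\,a_1$, and for $r\ge 1$ one uses $\rho_R\le f$, the dyadic maximal function $M_\sigma$, and its $L^{r/q}(\sigma)$-boundedness. (The paper skips the H\"older step by writing $\phi\le M_\sigma\phi$, but both routes land in the same place.)

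Part (ii), however, has a genuine gap. Your termwise comparison correctly disposes of $r\ge 1,\ q\ge r-1$ and $0<r<1,\ q\le r-1$, and you correctly identify $r>1,\ 0<q<r-1$ and $0<r<1,\ r-1<q<r$ as the remaining ranges, which are precisely the content of the lemma. But there you only gesture at a stopping-time / Calder\'on--Zygmund scheme (or, for $0<r<1$, at the duality $a_1\approx a_3$ and a Carleson-embedding bound) and explicitly flag the bookkeeping as unresolved. No proof is given. The paper handles these ranges by different, elementary H\"older manipulations: for $0<q<r<1$ it proves the maximal-function estimate \eqref{ineq-3}, namely $\int\phi^{1/q}(M_\sigma\phi)^{-(1-r)/q}\,d\sigma\ge C\int\phi^{r/q}\,d\sigma$, via H\"older with exponents $1/r$ and $1/(1-r)$ together with the $L^{r/q}(\sigma)$-bound for $M_\sigma$; for $r>1$ it first reduces to $q$ small by the monotonicity of $a_4$ in $q$, then splits into $1+q<r\le 2$ and $r>2$, and uses \eqref{rest2-3} and the two-sided estimate \eqref{rest4-3} (the $q=1$ case of the lemma) to interpolate and absorb a power of $a_1$ back onto the left-hand side. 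Without carrying out a comparable closing argument, your proposal does not establish part (ii) in the hard ranges, and so does not prove the lemma as stated.
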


\begin{proof} We first prove statement (i) in the case $0<r<1$. Since $q\ge -(1-r)$, we estimate using Jensen's inequality, 
\begin{equation*}
\Big ( \frac{1}{|R|_\sigma} 
\int_R ( \sum_{Q \subseteq R} \lambda_Q \chi_Q )^{q} d \sigma\Big )^{\frac{1}{q}} 
\ge \Big ( \frac{1}{|R|_\sigma} 
\int_R ( \sum_{Q \subseteq R} \lambda_Q \chi_Q )^{-(1-r)} d \sigma\Big )^{-\frac{1}{1-r}} 
\end{equation*}
Hence,  
\begin{equation*}
\begin{split}
a_4(\Lambda) & \le  \sum_{R \in \mathcal D} \lambda_R \, |R|_\sigma  
 \frac{1}{|R|_\sigma} 
\int_R ( \sum_{Q \subseteq R} \lambda_Q \chi_Q )^{q} d \sigma\\
& =  \int_R \sum_{R \in \mathcal D} \lambda_R \, \chi_R \, ( \sum_{Q \subseteq R} \lambda_Q \chi_Q )^{-(1-r)} d \sigma \le C({r}) \, a_1(\Lambda),  
\end{split}
\end{equation*}
where we used summation by parts in the last line. 

In the case $r \ge 1$ and $0<q<r<\infty$, we use the maximal function inequality 
 for the dyadic maximal operator $M_\sigma: \, L^{\frac{r}{q}}(\sigma) \to L^{\frac{r}{q}}(\sigma)$, $\frac{r}{q}>1$. Letting  $\phi = (\sum_{R \in \mathcal D} \lambda_R \, \chi_R)^q$, we estimate
 \begin{equation*}
\begin{split}
 a_4(\Lambda) &=   \int_{\R^n} \sum_{R \in \mathcal D} \lambda_R \, \chi_R \, 
 \Big ( \frac{1}{|R|_\sigma} 
\int_R ( \sum_{Q \subseteq R} \lambda_Q \chi_Q )^{q} d \sigma
\Big )^{\frac{r-1}{q}} d \sigma \\ &\le  \int_{\R^n} \phi^{\frac{1}{q}} (M_\sigma \phi)^{\frac{r-1}{q}} d \sigma \\ & \le \int_{\R^n}  (M_\sigma \phi)^{\frac{r}{q}} d \sigma \\ &\le C \, \int_{\R^n} \phi^{\frac{r}{q}}  d \sigma =C \, a_1(\Lambda).
  \end{split}
\end{equation*}

To prove statement (ii), by Jensen's inequality it suffices to assume $q>0$. The case $r=1$ 
is trivial. 
Suppose  $0<q<r<1$. 
Then 
 \begin{equation*}
\begin{split}
a_4(\Lambda)& =  \int_{\R^n} \sum_{R \in \mathcal D} \lambda_R \, \chi_R \, 
 \Big ( \frac{1}{|R|_\sigma} 
\int_R ( \sum_{Q \subseteq R} \lambda_Q \chi_Q )^{q} d \sigma
\Big )^{-\frac{1-r}{q}} d \sigma 
\\ & \ge   \int_{\R^n} \Big( \Big(\sum_{R \in \mathcal D} \lambda_R \, \chi_R\Big)^q \Big)^{\frac{1}{q}}
 \Big ( M_\sigma (\sum_{R \in \mathcal D} \lambda_R \, \chi_R)^q \Big)^{-\frac{1-r}{q}} d \sigma. 
  \end{split}
\end{equation*}

 Let 
 $\phi = (\sum_{R \in \mathcal D} \lambda_R \, \chi_R)^q$. To complete the proof of \eqref{l3-3f}, 
 it remains to show that, for $r>q$, 
 \begin{equation}\label{ineq-3}
 \int_{\R^n} \phi^{\frac{1}{q}} (M_\sigma \phi)^{-\frac{1-r}{q}} d \sigma \ge C \,  \int_{\R^n} \phi^{\frac{r}{q}} d \sigma=C \, a_1(\Lambda).
  \end{equation}
 The preceding inequality is proved using H\"older's inequality with exponents $\frac{1}{r}$ 
 and $\frac{1}{1-r}$, together with the maximal function inequality in $L^{\frac{r}{q}}(\sigma)$: 
  \begin{equation*}
\begin{split}
  \int_{\R^n} \phi^{\frac{r}{q}} d \sigma & =  \int_{\R^n} \phi^{\frac{r}{q}} (M_\sigma \phi)^{- r\frac{1-r}{q}}\cdot  (M_\sigma \phi)^{r\frac{1-r}{q}} d \sigma \\ & \le \Big(\int_{\R^n} \phi^{\frac{1}{q}} 
  (M_\sigma \phi)^{\frac{1-r}{q}} d \sigma \Big)^r   \Big(\int_{\R^n} 
  (M_\sigma \phi)^{\frac{r}{q}} d \sigma \Big)^{1-r} \\ & \le C  \Big(\int_{\R^n} \phi^{\frac{1}{q}} 
  (M_\sigma \phi)^{\frac{1-r}{q}} d \sigma \Big)^r   \Big(\int_{\R^n} 
  \phi^{\frac{r}{q}} d \sigma \Big)^{1-r},  
  \end{split}
\end{equation*}

 which yields \eqref{ineq-3}. Thus, $a_4(\Lambda) \ge C \, a_1(\Lambda)$. 
 
 We now consider the case $r> 1$. By Jensen's inequality, it suffices to consider 
 $q>0$ small enough, so without loss of generality we will assume  $0<q<\min(1, r-1)$.

 If $1 < r < \infty$, then by \eqref{rest1-3}, 
   \begin{equation}\label{rest2-3}
     a_1(\Lambda) \le r \, \sum_{R \in \mathcal D} \lambda_R \, |R|_\sigma 
   \Big( \frac{1}{|R|_\sigma} \int_R ( \sum_{Q \subseteq R} \lambda_Q \chi_Q )^{r-1} d \sigma\Big). 
   \end{equation}
   
      We will also need the elementary summation by parts inequality, for $r\ge 1$ (see 
      \cite{COV2}), 
        \begin{equation}
\begin{split}\label{rest1-3}
   \sum_{R \in \mathcal D} \lambda_R \, \chi_R &
   ( \sum_{Q \subseteq R}  \lambda_Q \, \chi_Q)^{r-1} \le ( \sum_{R \in \mathcal D} \lambda_R \, \chi_R )^r \\ & \le r \, \sum_{R \in \mathcal D} \lambda_R \, \chi_R 
   ( \sum_{Q \subseteq R}  \lambda_Q \, \chi_Q)^{r-1}. 
         \end{split}
\end{equation}

    We consider separately two subcases, $1+q < r \le 2$, and $r>2$. 
  
  Suppose first that $1 +q < r \le 2$. Then by H\"older's inequality with exponents  $t=\frac{1-q}{2-r} >1$ and 
  $t'=\frac{1-q}{r-1-q}$, 
     \begin{equation*}
\begin{split}
  & \frac{1}{|R|_\sigma} \int_R ( \sum_{Q \subseteq R} \lambda_Q \chi_Q )^{r-1} d \sigma \\ 
  & \le \Big(\frac{1}{|R|_\sigma} \int_R ( \sum_{Q \subseteq R} \lambda_Q \chi_Q )^{q} d \sigma
   \Big)^{\frac{2-r}{1-q}} \Big(\frac{1}{|R|_\sigma} \int_R  \sum_{Q \subseteq R} \lambda_Q \chi_Q 
   \, d \sigma
   \Big)^{\frac{r-1-q}{1-q}}\\ & = \Big(\frac{1}{|R|_\sigma} \int_R ( \sum_{Q \subseteq R} \lambda_Q \chi_Q )^{q} d \sigma
   \Big)^{\frac{2-r}{1-q}}
    \Big(\frac{1}{|R|_\sigma} \sum_{Q \subseteq R} \lambda_Q \, |Q|_\sigma
 \Big)^{\frac{r-1-q}{1-q}}.  
      \end{split}
\end{equation*}

    Substituting this estimate into \eqref{rest2-3}, we obtain 
      \begin{equation*}
\begin{split}
     a_1(\Lambda) &\le r \,  \sum_{R \in \mathcal D} \lambda_R \, |R|_\sigma  \Big(\frac{1}{|R|_\sigma}  \int_R ( \sum_{Q \subseteq R} \lambda_Q \chi_Q )^{q} d \sigma
   \Big)^{\frac{2-r}{1-q}}  \\ &\times 
     \Big(\frac{1}{|R|_\sigma} \sum_{Q \subseteq R} \lambda_Q \, |Q|_\sigma
 \Big)^{\frac{1-q}{r-1-q}}.
    \end{split}
\end{equation*}
  Using H\"older's inequality for sums with exponents 
    $s=\frac{(1-q)(r-1)}{(2-r)q}$ and $s'=\frac{(1-q)(r-1)}{r-1-q}$, we estimate 
         \begin{equation*}
\begin{split}
 a_1(\Lambda)  & \le r   \Big(\sum_{R \in \mathcal D} \lambda_R \, |R|_\sigma  \Big(\frac{1}{|R|_\sigma} \int_Q (\sum_{Q \subseteq R} \lambda_Q \, \chi_Q)^q d\sigma
 \Big)^{\frac{r-1}{q}}\Big)^{\frac{1}{s'}}\\ & \times  \Big(\sum_{R \in \mathcal D} \lambda_R \, |R|_\sigma  \Big(\frac{1}{|R|_\sigma} \sum_{Q \subseteq R} \lambda_Q \, |Q|_\sigma
 \Big)^{r-1}\Big)^{\frac{1}{s}}
  \\ & = r \,  a_4(\Lambda)^{\frac{1}{s'}} \Big(\sum_{R \in \mathcal D} \lambda_R \, |R|_\sigma  \Big(\frac{1}{|R|_\sigma} \sum_{Q \subseteq R} \lambda_Q \, |Q|_\sigma
 \Big)^{r-1}\Big)^{\frac{1}{s}}.
     \end{split}
\end{equation*}
  By the known estimate  for $r \ge 1$ (see \cite{COV2}), 
    \begin{equation}\label{rest4-3}
\frac{1}{C}  \,   a_1(\Lambda)  \le \sum_{R \in \mathcal D} \lambda_R \, |R|_\sigma  \Big(\frac{1}{|R|_\sigma} \sum_{Q \subseteq R} \lambda_Q \, |Q|_\sigma
 \Big)^{r-1} \le C  \,   a_1(\Lambda),  
   \end{equation}
   where  $C>0$ is a constant 
  which depends only on $r$.  
  Hence, 
  $$
       a_1(\Lambda) \le C \,   a_4(\Lambda)^{\frac{1}{s'}} \, a_1(\Lambda)^{\frac{1}{s}},
  $$
  which yields $a_1(\Lambda) \le C \, a_4(\Lambda)$. 
  
  It the second subcase $r>2$, assuming as above that $0<q<1$, we estimate by H\"older's inequality with exponents $t=\frac{r-1-q}{1-q}>1$ 
  and $t'=\frac{r-1-q}{r-2}$, 
        \begin{equation*}
\begin{split}
 & \frac{1}{|R|_\sigma}  \sum_{Q \subseteq R} \lambda_R \, |R|_\sigma 
 = \frac{1}{|R|_\sigma} \int_{R}  ( \sum_{Q \subseteq R} \lambda_R \chi_R)^q 
 \cdot  ( \sum_{Q \subseteq R} \lambda_R \chi_R )^{1-q} d \sigma 
 \\ & \le 
 \Big( \frac{1}{|R|_\sigma} \int_{R} ( \sum_{Q \subseteq R} \lambda_R \chi_R)^q  
 d \sigma \Big)^{\frac{1}{t'}} \Big( \frac{1}{|R|_\sigma} \int_{R} ( \sum_{Q \subseteq R} \lambda_R \chi_R )^{r-1}  
 d \sigma \Big)^{\frac{1}{t}}.  
  \end{split}
\end{equation*}
 By \eqref{rest4-3} and the preceding estimate,
      \begin{equation*}
\begin{split}
 a_1(\Lambda) & \le C  \sum_{R \in \mathcal D} \lambda_R \,  |R|_\sigma  \, 
  \Big(  \frac{1}{|R|_\sigma}   \sum_{Q \subseteq R} \lambda_Q \, |Q|_\sigma \Big)^{r-1} 
  d \sigma \\ 
  &\le C  \sum_{R \in \mathcal D} \lambda_R \,  |R|_\sigma  \,  \Big( \frac{1}{|R|_\sigma} \int_{R} ( \sum_{Q \subseteq R} \lambda_R \chi_R)^q  
 d \sigma \Big)^{\frac{r-1}{t'}} \\ & \times \Big( \frac{1}{|R|_\sigma} \int_{R} ( \sum_{Q \subseteq R} \lambda_R \chi_R )^{r-1}  
 d \sigma \Big)^{\frac{r-1}{t}}. 
    \end{split}
\end{equation*}
    Using now H\"older's inequality with exponents $s=\frac{r-1-q}{q(r-2)}>1$ 
  and $s'=\frac{r-1-q}{(r-1)(1-q)}$ for sums, so that $\frac{(r-1)s}{t'}=\frac{r-1}{q}$ and $\frac{(r-1)s'}{t}=1$, 
  we estimate, 
           \begin{equation*}
\begin{split}
 & a_1(\Lambda)  \le 
 C \Big( \sum_{R \in \mathcal D} \lambda_R \,  |R|_\sigma  \, \frac{1}{|R|_\sigma}  
 \int_R ( \sum_{Q \subseteq R} \lambda_Q \chi_Q )^{q} d \sigma)^{\frac{r-1}{q}}\Big)^{\frac{1}{s}}  \\ 
 & \times 
\Big( \sum_{R \in \mathcal D} \lambda_R \,  |R|_\sigma  \, \frac{1}{|R|_\sigma}  \int_R ( \sum_{Q \subseteq R} \lambda_Q \chi_Q )^{r-1} d \sigma\Big)^{\frac{1}{s'}} \le 
C \,  a_2(\Lambda)^{\frac{1}{s}} a_1(\Lambda)^{\frac{1}{s'}},
       \end{split}
\end{equation*}
  where we used \eqref{rest2-3} again in the last line.  This completes the proof of statement (ii). 
  \end{proof}

\begin{Lem}\label{lemma0} Let $\sigma \in M^+(\R^n)$, and let $0<q \le \infty$. 

(i) If either $0<\tau \le \min(r,1)$, or $q<r<\tau < \infty$, then 
\begin{equation}\label{l3-3a} 
 a_1(\Lambda) \le  C \, a_2(\Lambda), 
\end{equation} 
where $C$ is a positive constant depending only on $r$, $q$, and $\tau$.

(ii) If $\max (q, \tau) <r <\infty$, then the converse inequality holds: 
\begin{equation}\label{l3-3b} 
 a_2(\Lambda) \le  C \, a_1(\Lambda), 
\end{equation} 
where $C$ is a positive constant depending only on $r$, $q$, and $\tau$. 

\end{Lem}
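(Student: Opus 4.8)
The plan is to compare $a_1(\Lambda)$ and $a_2(\Lambda)$ by passing through the reference quantity $a_4(\Lambda)$, which — as noted after \eqref{l4-3} — is exactly $a_2(\Lambda)$ with $\tau=1$, and which Lemma \ref{lemma-2} already ties two-sidedly to $a_1(\Lambda)$ in the relevant ranges. Throughout I write $\rho_R=\sum_{Q\subseteq R}\lambda_Q\chi_Q$ (cf.\ \eqref{rho}), $\langle h\rangle_R=|R|_\sigma^{-1}\int_R h\,d\sigma$, $g_R=\langle\rho_R^q\rangle_R$, $F=\sum_{Q\in\mathcal D}\lambda_Q\chi_Q$, so that $a_1(\Lambda)=\int F^r\,d\sigma$, $a_4(\Lambda)=\sum_R\lambda_R|R|_\sigma\,g_R^{(r-1)/q}$, and $a_2(\Lambda)=\int\bigl(\sum_R\lambda_R g_R^{\beta}\chi_R\bigr)^{\tau}d\sigma$ with $\beta=\tfrac1q(\tfrac r\tau-1)$, $\beta\tau=\tfrac{r-\tau}q$. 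The one feature not already present in Lemma \ref{lemma-2} is the outer exponent $\tau$, and my strategy is: when $\tau\ge1$, absorb it via the elementary summation-by-parts inequalities \eqref{rest1-3}--\eqref{rest4-3} (applied with exponent $\tau$) followed by Hölder in the summation index $R$ and one application of the dyadic maximal theorem $M_\sigma\colon L^{r/q}(\sigma)\to L^{r/q}(\sigma)$ (available since $r>q$ whenever these cases occur); when $\tau<1$, replace $a_2(\Lambda)$ through the Carleson-duality identity of Lemma \ref{duality} by a supremum of $\sum_R\lambda_R^{\tau}g_R^{(r-\tau)/q}\nu_R$ over sequences $\nu$ normalized by \eqref{l3-3carl}, and work with that linear expression. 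As in Lemma \ref{lemma-2}, Jensen's inequality reduces matters to small $q$ without altering $\beta$.

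For the bound $a_1\lesssim a_2$ of part~(i): in the regime $q<r<\tau$, after the reduction to small $q$ one has $\tau>r\ge1$; I would apply the lower summation-by-parts bound of \eqref{rest1-3} to the sequence $\mu_R:=\lambda_R g_R^{\beta}$, integrate, and split the resulting exponents by Hölder in $R$ so that $a_4(\Lambda)$ reappears together with a residual power of $g_R$ controlled by $M_\sigma$ on $L^{r/q}(\sigma)$, giving $a_2(\Lambda)\gtrsim a_4(\Lambda)\approx a_1(\Lambda)$. In the regime $0<\tau\le\min(r,1)$ (so $\tau<1$), I would pass to the Carleson-duality form of $a_2(\Lambda)$ and produce an admissible test sequence $\nu$ for which $\sum_R\lambda_R^{\tau}g_R^{(r-\tau)/q}\nu_R$ dominates $a_1(\Lambda)$, using the elementary bound $\lambda_R\le g_R^{1/q}$ (immediate from $\rho_R\ge\lambda_R$ on $R$) and $r\ge\tau$ to keep the normalization \eqref{l3-3carl} under control; constructing $\nu$ so that it is simultaneously admissible and efficient is the heart of this case.

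For the bound $a_2\lesssim a_1$ of part~(ii), where $\max(q,\tau)<r$: when $1\le\tau<r$ I would run the mirror of the first computation — the upper summation-by-parts bound of \eqref{rest1-3} for $\mu_R=\lambda_R g_R^{\beta}$, Hölder in $R$, and one pass of $M_\sigma$ on $L^{r/q}(\sigma)$ — to get $a_2(\Lambda)\lesssim a_4(\Lambda)$, hence $a_2(\Lambda)\lesssim a_1(\Lambda)$ by Lemma \ref{lemma-2}(i), whose hypothesis $q<r$ is exactly what is available. When $0<\tau<1$, the plain subadditivity $(\sum_R x_R)^{\tau}\le\sum_R x_R^{\tau}$ is too lossy, so I would again use the Carleson-duality form of $a_2(\Lambda)$, factor $g_R^{(r-\tau)/q}=g_R^{(r-1)/q}g_R^{(1-\tau)/q}$, pair the first factor with $\lambda_R$ and the Carleson data through the $a_4\lesssim a_1$ half of Lemma \ref{lemma-2} and \eqref{rest4-3}, and control the second factor by Hölder in the sum and the maximal theorem.

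The step I expect to be the main obstacle is the exponent bookkeeping, and in particular (for $\tau<1$) the construction and verification of the auxiliary Carleson sequence: because $\beta$ depends on $\tau$, the weight $g_R^{\beta}$ cannot be pulled out of the sum, and in each regime it must be cut into one factor that recombines with $\lambda_R\chi_R$ to rebuild $F^r$ via \eqref{rest1-3}--\eqref{rest4-3} and one factor that is swept up by a single use of the $L^{r/q}(\sigma)$ maximal bound, with all auxiliary Hölder exponents required to be $\ge1$. Reconciling these requirements — and, in the dual formulation, making sure the test sequence built from Lemma \ref{lemma-2} still satisfies \eqref{l3-3carl} after insertion of the correction factor $(\lambda_R g_R^{-1/q})^{r-\tau}$ — is precisely what forces the constraints $0<\tau\le\min(r,1)$ and $q<r<\tau$ in part~(i), and $\max(q,\tau)<r$ in part~(ii), and is where essentially all the work lies.
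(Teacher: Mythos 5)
Your overall strategy (relate $a_1$ and $a_2$ through $a_4$, summation by parts, Carleson duality, and the $L^{r/q}(\sigma)$ maximal bound) is in the right spirit, but the proposal contains a false reduction and misses the one genuinely nonobvious device that the paper uses, so as written it does not close.

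First, in the regime $q<r<\tau$ of part~(i) you write that ``after the reduction to small $q$ one has $\tau>r\ge 1$.'' Shrinking $q$ by Jensen's inequality does not increase $r$: the case $q<r<\tau$ with $r<1$ is perfectly possible (e.g.\ $q=0.1$, $r=0.5$, $\tau=2$) and is explicitly covered by the lemma, so a route that only works for $r\ge 1$ leaves a hole. The paper does not pass through $a_4$ here at all; it bounds the inner sum pointwise from below by $F\,(M_\sigma F^q)^{\beta}$ (with $\beta=\frac1q(\frac r\tau-1)<0$), getting $a_2(\Lambda)\ge\int F^\tau (M_\sigma F^q)^{(r-\tau)/q}\,d\sigma$, and then uses H\"older with exponents $\tau/r$ and $(\tau/r)'$ plus the $L^{r/q}(\sigma)$ maximal theorem to recover $a_1(\Lambda)$. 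This argument needs only $r>q$, not $r\ge 1$.

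Second, for $0<\tau\le\min(r,1)$ your plan is to apply Lemma~\ref{duality} to $a_2$ and transfer the $a_1$-extremal Carleson sequence $\nu$ (normalized with exponent $\tfrac1{1-r}$) directly into the $a_2$-duality sum, using $\lambda_R\le g_R^{1/q}$. That transfer does not go through: since $\tau\le r$ gives $\tfrac1{1-\tau}\le\tfrac1{1-r}$, Carleson admissibility at exponent $\tfrac1{1-r}$ does not imply admissibility at exponent $\tfrac1{1-\tau}$ (the required bound goes the wrong way). The paper resolves this only in the sub-case $r<1$, and only by a H\"older split with exponent $s=\tfrac{1-\tau}{1-r}\ge 1$: one of the two factors contains the modified sequence $\mu_Q=(\nu_Q/|Q|_\sigma)^s|Q|_\sigma$, which \emph{is} a $\tfrac1{1-\tau}$-Carleson sequence because $s/(1-\tau)=1/(1-r)$, and the other factor reproduces $a_1(\Lambda)$ via Lemma~\ref{lemma-2}(i); the resulting bound $a_1\le C\,a_2^{1/s}a_1^{1/s'}$ is then bootstrapped. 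When $r\ge 1$, Lemma~\ref{duality} is simply unavailable for $a_1$, and the paper switches to an entirely different mechanism: summation by parts for the weighted sequence $\lambda_R a_R$ with $a_R=g_R^{(r-\tau)/(q\tau)}$, followed by Jensen and \emph{Kolmogorov's} weak-type inequality for the localized dyadic maximal operator $M_\sigma^R$ with respect to the probability measure $\tfrac1{|R|_\sigma}\chi_R\,d\sigma$ (the negative power $\tau-1$ of the sum forces a reverse-type maximal estimate). That Kolmogorov step is the genuinely new ingredient and does not appear in your outline, so the hard sub-case $r\ge 1$, $\tau<1$ is not covered.

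Finally, for part~(ii) you split into $\tau\ge 1$ and $\tau<1$ and, in the latter case, reach again for Carleson duality because you judge subadditivity ``too lossy.'' Neither is needed: the pointwise bound $g_R\le M_\sigma\phi$ on $R$ (with $\phi=F^q$) is applied \emph{before} raising to the power $\tau$, giving $a_2(\Lambda)\le\int\phi^{\tau/q}(M_\sigma\phi)^{(r-\tau)/q}\,d\sigma\le\int(M_\sigma\phi)^{r/q}\,d\sigma\le C\,a_1(\Lambda)$ in three lines, uniformly in $\tau<r$. Your $\tau<1$ detour through duality is at best an unnecessary complication, and at worst risks the same admissibility problem as in part~(i).
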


\begin{proof} We first prove statement (i). Let $0<r<1$.   

Suppose $0<\tau\le r$. Set 
\begin{equation}\label{l3-4}
d_R = \frac{1}{|R|_\sigma} 
\int_R \Big ( \sum_{Q \subseteq R} \lambda_Q \chi_Q \Big )^{q} d \sigma. 
\end{equation} 
Suppose $\nu=(\nu_R)$ is a Carleson sequence such that $||\nu||_{f^{\infty, \frac{1}{1-r}}_0(\sigma)} \le 1$ as in \eqref{l3-3c}. 
Let $s= \frac{1-\tau}{1-r}\ge 1$. 
By H\"older's inequality with exponents $s$ and $s'$, 
  \begin{equation*}
\begin{split}
& \sum_{R \in \mathcal D} \lambda_R^r \,  \nu_R = 
 \sum_{R \in \mathcal D} \lambda_R^{\frac{\tau}{s}}\,  |R|_\sigma  \, \frac{\nu_R}{|R|_\sigma} \, 
 d_R^{\frac{\gamma}{s}} \cdot d_R^{-\frac{\gamma}{s}} \,  \lambda_R^{r-\frac{\tau}{s}}\\ & \le  \Big[
  \sum_{R \in \mathcal D} \lambda_R^{\tau} \, |R|_\sigma \,  \Big(\frac{\nu_R}{|R|_\sigma}\Big)^s  
 d_R^{\gamma} 
  \Big]^{\frac{1}{s}} 
   \Big[
  \sum_{R \in \mathcal D} \lambda_R^{(r- \frac{\tau}{s})s'} \, |R|_\sigma \, 
 d_R^{-\gamma(s'-1)}  
  \Big]^{\frac{1}{s'}}. 
      \end{split}
\end{equation*}

Note that $(r- \frac{\tau}{s})s'=1$, and $\gamma=\frac{r-\tau}{q}$, so that 
$$\frac{\gamma}{\tau}= \frac{1}{q}(\frac{r}{\tau}-1), \qquad \gamma (s'-1) = \frac{1-r}{q}.$$
Letting 
$$\mu_Q = \Big(\frac{\nu_Q}{|Q|_\sigma}\Big)^s \, |Q|_\sigma, \qquad Q \in \mathcal D, 
$$
we see that $||\mu||_{f^{\frac{1}{1-\tau}, \infty}_0(\sigma)}\le 1$, that is, 
$$
\sum_{Q \subseteq P}  \mu_Q^{\frac{1}{1-\tau}} |Q|_\sigma 
\le |P|_\sigma, \quad \forall \, P \in \mathcal D. 
$$
It follows from \eqref{l2-3ca}  and \eqref{l3-3c} with the exponent $\tau$ in place of $r$,  and $\mu_R$ in place 
of $\nu_R$, 
 \begin{equation*}
\begin{split}
 \sum_{R \in \mathcal D} \lambda_R^{\tau} \Big(\frac{\nu_R}{|R|_\sigma}\Big)^s  |R|_\sigma \, 
 d_R^{\gamma} & =  \sum_{R \in \mathcal D} \lambda_R^\tau \, 
 d_R^{\frac {\tau}{q} (\frac{r}{\tau}-1)} \, \mu_R
  \\ & \le \int_{\R^n} \Big( \sum_{R \in \mathcal D} \lambda_R \, 
 d_R^{\frac {1}{q} (\frac{r}{\tau}-1)} \, \chi_R \Big)^\tau d \sigma.
  \end{split}
\end{equation*}

Lemma \ref{lemma-2} (i)  yields  
\begin{equation*}
\begin{split}
\sum_{R \in \mathcal D} \lambda_R^{(r- \frac{\tau}{s})s'} 
 d_R^{-\gamma(s'-1)} |R|_\sigma 
   & =  \sum_{R \in \mathcal D}  \lambda_R \, |R|_\sigma \, d_R^{-\frac{1-r}{q}} 
   \\  & \le C \int_{\R^n} \left ( \sum_{Q \in \mathcal D} \lambda_Q \chi_Q \right )^{r} d \sigma.
   \end{split}
\end{equation*}

  Combining the preceding estimates, we obtain 
  \begin{equation*}
\begin{split}
  \int_{\R^n} \left ( \sum_{Q \in \mathcal D} \lambda_Q \, \chi_Q \right )^{r} d \sigma 
  & \le C \left( \int_{\R^n} \Big( \sum_{R \in \mathcal D} \lambda_R \, 
 d_R^{\frac {1}{q} (\frac{r}{\tau}-1)} \, \chi_R \,  \Big)^\tau d \sigma\right)^{\frac{1}{s}} \\
 & \times  \left(  \int_{\R^n} \Big( \sum_{R \in \mathcal D} \lambda_R \, \chi_R \Big)^r   d \sigma \right)^{\frac{1}{s'}},
      \end{split}
\end{equation*}
    which completes the proof of \eqref{l3-3a} in the case $0<\tau \le r <1$. 
    
    In the case $0<\tau<1$, $r \ge 1$, we set 
    \begin{equation}\label{l3-5}
a_R =\Big( \frac{1}{|R|_\sigma} 
\int_R \rho_R^q \, d \sigma\Big)^{\frac{r-\tau}{q \tau}}, 
\end{equation} 
where $\rho_R$ is defined by \eqref{rho}. 
Using summation by parts, we estimate 
    \begin{equation*}
\begin{split}
 a_2(\Lambda) & =  \int_{\R^n} \left ( \sum_{R \in \mathcal D} \lambda_R \, a_R \, \chi_R \right )^{\tau} d \sigma \ge  
 \sum_{R \in \mathcal D} \lambda_R \,  a_R \, |R|_\sigma \\ & \times
 \frac {1}{|R|_\sigma}  \int_{R} \Big( \sum_{Q \subseteq R} \lambda_Q \, a_Q \, \chi_Q \Big)^{\tau-1}   d \sigma.
       \end{split}
\end{equation*}
          We denote by $M_\sigma^R$ the dyadic maximal operator scaled to a cube  $R$: 
          \begin{equation}\label{l3-6}
          M_\sigma^R f(x) = \sup_{Q\in \mathcal D: \, x \in Q, \, Q \subseteq R}  \frac {1}{|Q|_\sigma}  \int_{Q} 
          |f| \, d \sigma, \quad x \in R.
               \end{equation} 
     
     Clearly, 
     $$
     \sum_{Q \subseteq R} \lambda_Q \, a_Q \, \chi_Q \le 
    \rho_R \,  (M_\sigma^R \rho_R^q)^{\frac{r-\tau}{q \tau}}.
    $$
    Hence, by Jensen's inequality and the maximal inequality for $M_\sigma^R$, 
         \begin{equation*}
\begin{split}
    & \frac {1}{|R|_\sigma}  \int_{R} \Big( \sum_{Q \subseteq R} \lambda_Q \, a_Q \, \chi_Q \Big)^{\tau-1}   d \sigma  \ge  C  \frac {1}{|R|_\sigma}   \int_{R} (M_\sigma^R \rho_R^q)^{-\frac{r}{q \tau}} d \sigma \\ & \ge C 
    \Big( \frac {1}{|R|_\sigma}   \int_{R} (M_\sigma^R \rho_R^q)^{\epsilon}
     d \sigma\Big)^{-\frac{r(1-\tau)}{q \tau \epsilon}} 
      \ge  C \,   \Big( \frac {1}{|R|_\sigma}   \int_{R} \rho_R^q 
     d \sigma\Big)^{-\frac{r(1-\tau)}{q \tau}}, 
         \end{split}
\end{equation*} 
      where in the last line we used Kolmogorov's maximal inequality for $M^R_\sigma: \, L^1(\omega) \to L^{\epsilon} (\omega)$, $0<\epsilon<1$,  with the probability measure $d \omega=\frac {1}{|R|_\sigma} \chi_R d \sigma$, applied to $f=\rho_R^q$ .
     
     Consequently, 
       \begin{equation*}
\begin{split}
   a_2(\Lambda) & \ge C  \sum_{R \in \mathcal D} \lambda_R \,  a_R \, |R|_\sigma 
    \Big( \frac {1}{|R|_\sigma}   \int_{R} \rho_R^q 
     d \sigma\Big)^{-\frac{r(1-\tau)}{q \tau}} \\ & = C  \sum_{R \in \mathcal D}
      \lambda_R \, |R|_\sigma 
    \Big( \frac {1}{|R|_\sigma}   \int_{R} \rho_R^q 
     d \sigma\Big)^{\frac{r-1}{q}} = C a_4(\Lambda).  
     \end{split}
\end{equation*} 
    By Lemma \ref{lemma-2}, we have $a_4(\Lambda) \ge C a_1(\Lambda)$, which completes the proof 
    of \eqref{l3-3a} in the case $0<\tau<1$, $r \ge 1$.

    Suppose now that $\tau>r>q$. Then 
         \begin{equation*}
\begin{split}
   &   \int_{\R^n} \Big( \sum_{R \in \mathcal D} \lambda_R \, 
 d_R^{\frac {1}{q} (\frac{r}{\tau}-1)} \, \chi_R \,  \Big)^\tau d \sigma\\
 & \ge      \int_{\R^n} \Big( \sum_{R \in \mathcal D} \lambda_R  \, \chi_R  \Big)^\tau 
 \Big(M_\sigma \Big ( \sum_{R \in \mathcal D} \lambda_R  \, \chi_R\Big)^q  \Big)^{\frac{r-\tau}{q}} d \sigma \\ & \ge  C \int_{\R^n} \Big ( \sum_{Q \in \mathcal D} \lambda_Q \, \chi_Q \Big )^{r} d \sigma.
         \end{split}
\end{equation*} 
      The last inequality follows, as in the proof of Lemma \ref{lemma-2}, by letting $\phi = \Big ( \sum_{Q \in \mathcal D} \lambda_Q \, \chi_Q \Big )^{q}$, and applying H\"older's inequality with exponents $\frac{\tau}{r}$ and 
      $(\frac{\tau}{r})^{\prime}$, together with the maximal function inequality in $L^{\frac{r}{q}}(d \sigma)$ for 
      $\frac{r}{q}>1$: 
           \begin{equation*}
\begin{split}
      \int_{\R^n} \phi^{\frac{r}{q}} d \sigma & =       \int_{\R^n} \phi^{\frac{r}{q}} \, 
      (M_\sigma \phi)^{\frac{r-\tau}{q} \frac{r}{\tau}} \cdot   (M_\sigma \phi)^{\frac{\tau-r}{q} \frac{r}{\tau}} \, d \sigma \\ & \le \Big(    \int_{\R^n}  \phi^{\frac{\tau}{q}} \, 
      (M_\sigma \phi)^{\frac{r-\tau}{q}} \, d \sigma
      \Big)^{\frac{r}{\tau}}   \Big(    \int_{\R^n}   \, 
      (M_\sigma \phi)^{\frac{r}{q}} \, d \sigma
      \Big)^{1-\frac{r}{\tau}}\\ 
     & \le C \Big(    \int_{\R^n}  \phi^{\frac{\tau}{q}} \, 
      (M_\sigma \phi)^{\frac{r-\tau}{q}} \, d \sigma
      \Big)^{\frac{r}{\tau}}   \Big(    \int_{\R^n}  \phi^{\frac{r}{q}} \, d \sigma
      \Big)^{1-\frac{r}{\tau}}.
  \end{split}
\end{equation*}  
      This proves the inequality $a_1(\Lambda) \le C \,    a_2(\Lambda)$. 
      
      The converse inequality for $\max (q, \tau) <r <\infty$ is immediate from the maximal 
      function inequality in $L^{\frac{r}{q}}(\sigma)$: if $\phi = \Big ( \sum_{Q \in \mathcal D} \lambda_Q \, \chi_Q \Big )^{q}$, then 
         \begin{equation*}
\begin{split}
      a_2(\Lambda) & \le  \int_{\R^n} \phi^{\frac{\tau}{q}} 
      (M_\sigma \phi)^{\frac{r-\tau}{q}} d \sigma \le \int_{\R^n} 
      (M_\sigma \phi)^{\frac{r}{q}} d \sigma \\  & \le C \, \int_{\R^n} \phi^{\frac{r}{q}} d \sigma = C \, a_1(\Lambda). 
        \end{split}
\end{equation*}  
      \end{proof}
      

\begin{Lem}\label{lemma-3} Let $\sigma \in M^+(\R^n)$, and let $0< q \le \infty$.

(i) If $1 \le \tau <r$, then 
\begin{equation}\label{l-1-a} 
 a_1(\Lambda) \le  C \, a_2(\Lambda). 
\end{equation}

(ii) If $0<r< \tau$, then 
\begin{equation}\label{l-1-b} 
 a_2(\Lambda) \le  C  \, a_1(\Lambda), 
\end{equation} 
\end{Lem}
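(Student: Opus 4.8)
The plan is to deduce both parts from the case $\tau=1$, in which $a_2$ coincides with $a_4$, and then to invoke Lemma~\ref{lemma-2}. As a preliminary reduction, write $w_R:=\bigl(\frac{1}{|R|_\sigma}\int_R\rho_R^{q}\,d\sigma\bigr)^{1/q}$, with $\rho_R$ as in \eqref{rho}, so that
\[
a_2(\Lambda)=\int_{\R^n}\Bigl(\sum_{R\in\mathcal{D}}\lambda_R\, w_R^{\frac{r}{\tau}-1}\,\chi_R\Bigr)^{\tau}d\sigma .
\]
Since $w_R$ is nondecreasing in $q$ (monotonicity of power means) and the sign of $\frac{r}{\tau}-1$ is constant in each part, $a_2(\Lambda)$ is monotone in $q$; hence it is enough to prove (i) for all sufficiently small $q>0$, and likewise (ii), the remaining values of $q$ following by monotonicity. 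In particular one may assume $0<q<\min(1,|r-\tau|)$.

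For part (i) ($1\le\tau<r$) it suffices to show $a_4(\Lambda)\le C\,a_2(\Lambda)$, since then $a_1\le C\,a_4\le C\,a_2$, the first bound being Lemma~\ref{lemma-2}(ii) (available because $r>\tau\ge1$ and $q>0$). I would obtain $a_4\le C\,a_2$ by adapting the proof of Lemma~\ref{lemma-2}(ii), carrying the parameter $\tau$. With $b_R:=\lambda_R\, w_R^{\frac{r}{\tau}-1}$, so that $a_2(\Lambda)=\int(\sum_R b_R\chi_R)^{\tau}d\sigma$, apply the summation-by-parts equivalence \eqref{rest4-3} to $(b_R)$ with exponent $\tau\ge1$ to reduce matters to bounding $\sum_R b_R|R|_\sigma\bigl(\frac{1}{|R|_\sigma}\sum_{Q\subseteq R}b_Q|Q|_\sigma\bigr)^{\tau-1}$ from below; then estimate the inner averages and reassemble the outer sum by two successive applications of H\"older's inequality (one in the cube variable, one for the sum over $R$), splitting into the subcases $1\le\tau\le2$ and $\tau>2$ exactly as Lemma~\ref{lemma-2}(ii) splits into $1<r\le2$ and $r>2$. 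The H\"older exponents must be tuned so that after both steps the weights reorganize into $w_R^{r-1}$, i.e.\ the exponent occurring in $a_4$, producing a chain of inequalities in which $a_2$ or $a_4$ reappears on the right with exponents summing to $1$ and can therefore be absorbed; the combinatorics reduces to the identities linking $\frac1q(\frac r\tau-1)$, $\frac{r-\tau}q$ and $\frac{r-1}q$, as in the $0<\tau<1$, $r\ge1$ case of Lemma~\ref{lemma0}(i).

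For part (ii) ($0<r<\tau$) it suffices, symmetrically, to show $a_2(\Lambda)\le C\,a_4(\Lambda)$, since $a_4\le C\,a_1$ by Lemma~\ref{lemma-2}(i) (valid for small $q$). Here $\frac{r}{\tau}-1<0$, so the weights enter $a_2$ with a negative exponent, which is the favorable direction and allows a more direct argument: unfold $a_2$ by the summation-by-parts inequality \eqref{rest1-3} when $\tau\ge1$, or by superadditivity of $t\mapsto t^{\tau}$ when $r<\tau<1$; estimate the inner averages using the localized dyadic maximal operator $M_\sigma^R$ of \eqref{l3-6}, H\"older's inequality, and the $L^{r/q}(\sigma)$-boundedness of $M_\sigma$; and telescope, arriving at $C\,a_4(\Lambda)$.

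I expect the hard part to be part (i) with $\tau\ge1$: there the summation-by-parts identity \eqref{rest1-3}, the relevant Jensen inequalities, and the maximal-function estimates all point the opposite way to the regime $0<\tau<1$, $r\ge1$ handled in Lemma~\ref{lemma0}(i), so one cannot telescope directly down to $a_4$. In particular the naive estimate obtained by retaining only the diagonal term $Q=R$ in $\sum_{Q\subseteq R}b_Q\chi_Q$ loses a power of the chain length (elementary one-dimensional examples with $|R_k\setminus R_{k-1}|_\sigma$ and $\lambda_{R_k}$ constant already exhibit the gap), which is precisely what forces the two-step H\"older-with-absorption argument and the exponent bookkeeping described above.
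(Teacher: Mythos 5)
Your skeleton -- reduce to small $q$, then pass through $a_4$ via Lemma~\ref{lemma-2}, using summation by parts and H\"older/maximal-function machinery -- is the same one the paper follows, and your treatment of part (ii) is essentially the paper's: after applying Lemma~\ref{lemma-2}(ii) to the modified sequence $\lambda_R a_R$ (with $q=1$ when $\tau\ge 1$, and with a small auxiliary exponent $s$ when $r<\tau<1$), one estimates the weights $a_Q$ via the localized maximal operator $M_\sigma^R$, H\"older, and the $L^{r/q}(\sigma)$ bound for $M_\sigma$, arriving at $C\,a_4\le C\,a_1$. (One small slip: for $r<\tau<1$ the map $t\mapsto t^\tau$ is \emph{sub}additive, not superadditive, and the paper in fact unfolds $a_2$ by Lemma~\ref{lemma-2}(ii) rather than by any additivity of $t^\tau$.)

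The real concern is part (i), which you flag as the hard case and do not actually carry out. After the (correct, and shared) first reduction to bounding $\sum_R \lambda_R a_R |R|_\sigma \bigl(\tfrac{1}{|R|_\sigma}\sum_{Q\subseteq R}\lambda_Q a_Q |Q|_\sigma\bigr)^{\tau-1}$ from below, you propose to mimic the subcase split $1<r\le 2$ / $r>2$ of Lemma~\ref{lemma-2}(ii), carried over to $\tau$, via ``two successive H\"older inequalities.'' The paper takes a different and cleaner route at exactly this point: since $\gamma=\tfrac1q\bigl(\tfrac r\tau-1\bigr)>1$ after the small-$q$ reduction, it applies Jensen's inequality for sums to pull $\gamma$ outside the inner sum, uses summation by parts to pass to $\int_R\rho_R^{1+q}\,d\sigma$, and then closes with a single interpolation inequality among the exponents $q$, $1$, $1+q$, obtaining $a_2\ge C\,a_4$ (with $q=1$ in $a_4$) \emph{uniformly in $\tau\ge 1$}, with no split at $\tau=2$. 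So your assertion that ``one cannot telescope directly down to $a_4$'' is precisely what the interpolation step circumvents. Since you never produce the two H\"older inequalities, never say what exponents you would use, and the constraints they would need to satisfy are nontrivial (your own footnote on ``the combinatorics'' of $\tfrac1q(\tfrac r\tau-1)$, $\tfrac{r-\tau}q$, $\tfrac{r-1}q$ is where the argument actually lives), part (i) of the proposal is a plan, not a proof, and it is not clear the $\tau\le2$/$\tau>2$ route closes without the interpolation idea the paper uses.
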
 

\begin{proof} Since $r \ge 1$, by Lemma \ref{lemma-2} (ii), for every  $s>0$, 
\begin{equation}\label{l/1} 
 a_1(\Lambda) \le  C \, \, \sum_{R \in \mathcal D} \lambda_R \, |R|_\sigma  \Big( \frac{1}{|R|_\sigma} \int_R ( \sum_{Q \subseteq R} \lambda_Q \chi_Q )^{s} d \sigma \Big)^{\frac{r-1}{s}}. 
\end{equation} 
On the other hand, letting 
\begin{equation}\label{l/2}
a_R = \Big( \frac{1}{|R|_\sigma} \int_R ( \sum_{Q \subseteq R} \lambda_Q \chi_Q )^{q} d \sigma \Big)^{\frac{r-\tau}{q \tau}}, 
\end{equation}
and applying Lemma \ref{lemma-2} (i) with $r = \tau\ge 1$, $q=1$, and $\lambda_R \, a_R$ in place of 
$\lambda_R$, we obtain 
  \begin{equation*}
\begin{split}
     a_2(\Lambda) & = \int_{\R^n}  (   \sum_{R \in \mathcal D} \lambda_R \, a_R \, \chi_R)^{\tau} d \sigma
      \\ & \ge C \,  \sum_{R \in \mathcal D} \lambda_R \, a_R \, |R|_\sigma \Big( 
     \frac{1}{|R|_\sigma} \int_R ( \sum_{Q \subseteq R} \lambda_Q \, a_Q \, \chi_Q ) d \sigma  
      \Big)^{\tau-1}.
        \end{split}
\end{equation*}  
      
      By Jensen's inequality, it suffices to prove \eqref{l3-3a} for $q$ small, so that 
we may assume without loss of generality $r> \tau(1+q)$. Then 
$\gamma=\frac{1}{q}(\frac{r}{\tau}-1)>1$. 

Notice that 
  \begin{equation}
\begin{split}\label{l/5}  
a_2(\Lambda) & = \int_{\R^n}  (   \sum_{R \in \mathcal D} \lambda_R \, a_R \, \chi_R)^{\tau} d \sigma
      \\ & \ge C \,  \sum_{R \in \mathcal D} \lambda_R \, a_R \, |R|_\sigma \Big( 
     \frac{1}{|R|_\sigma} \int_R ( \sum_{Q \subseteq R} \lambda_Q \, a_Q \, \chi_Q ) d \sigma  
      \Big)^{\tau-1}.
       \end{split}
\end{equation}  

Next, we estimate using Jensen's inequality for sums, 
 \begin{equation}
\begin{split}\label{l/6}      
  & \int_R ( \sum_{Q \subseteq R} \lambda_Q \, a_Q \, \chi_Q) d \sigma =     \sum_{Q \subseteq R} \lambda_Q \,  a_Q \, |Q|_\sigma \\ &=   \sum_{Q \subseteq R} \lambda_Q \,  |Q|_\sigma \, 
  \Big(  \frac{1}{|Q|_\sigma}  \int_Q ( \sum_{S \subseteq Q} \lambda_S \,  \chi_S )^{q} d \sigma
  \Big)^{\gamma}\\ & \ge \Big( 
  \sum_{Q \subseteq R} \lambda_Q \,  |Q|_\sigma \, 
   \frac{1}{|Q|_\sigma}  \int_Q ( \sum_{S \subseteq Q} \lambda_S \,  \chi_S )^{q} d \sigma
   \Big )^{\gamma} \Big( \sum_{Q \subseteq R} \lambda_Q \,  |Q|_\sigma  \Big )^{1-\gamma}. 
    \end{split}
\end{equation}  
 
We simplify using summation by parts,
 \begin{equation*}
\begin{split}
 \sum_{Q \subseteq R} \lambda_Q \, 
    \int_Q ( \sum_{S \subseteq Q} \lambda_S \,  \chi_S )^{q} d \sigma 
    & =   \int_R \sum_{Q \subseteq R} \lambda_Q \, \chi_Q \, 
     ( \sum_{S \subseteq Q} \lambda_S \,  \chi_S )^{q} d \sigma \\ &
     \ge C \,  \int_R ( \sum_{Q \subseteq R} \lambda_Q \,  \chi_Q )^{1+q} d \sigma.
      \end{split}
\end{equation*}  
      Hence, combining the preceding estimates and using the interpolation inequality, 
         \begin{equation*}
\begin{split}
& \Big( \int_R ( \sum_{Q \subseteq R} \lambda_Q \, 
 \chi_Q )^{1+q} d \sigma \Big)^{\gamma(\tau-1)} \Big( \int_R ( \sum_{Q \subseteq R} \lambda_Q \,  \chi_Q )^{q} d \sigma \Big)^{\gamma} \\ & \ge 
   \int_R ( \sum_{Q \subseteq R} \lambda_Q \,  \chi_Q ) \, d \sigma \Big)^{r-\tau + 
   \gamma(\tau-1)},
          \end{split}
\end{equation*}   
      we obtain
       \begin{equation*}
\begin{split}     
       a_2(\Lambda) & \ge C \,  \sum_{R \in \mathcal D} \lambda_R \, |R|_\sigma  \, a_R \, 
      \Big(  \frac{1}{|R|_\sigma}   \int_R ( \sum_{Q \subseteq R} \lambda_Q \,  \chi_Q )^{1+q} d \sigma \Big)^{\gamma(\tau-1)} \\ & \times
       \Big(  \frac{1}{|R|_\sigma}   \int_R ( \sum_{Q \subseteq R} \lambda_Q \,  \chi_Q ) \, d \sigma \Big)^{(1-\gamma)(\tau-1)}\\
      & \ge C \,  
      \sum_{R \in \mathcal D} \lambda_R \, |R|_\sigma  \, \Big(  \frac{1}{|R|_\sigma}   \int_R ( \sum_{Q \subseteq R} \lambda_Q \,  \chi_Q ) \, d \sigma \Big)^{r-1} \\ &= C \, a_4(\Lambda) \ge C \, a_1(\Lambda),
                   \end{split}
\end{equation*}    
            where in the last line we used Lemma \ref{lemma-2} (ii) with $q=1$ in the expression 
            for $a_4(\Lambda)$, and $r\ge 1$.  
            This completes the proof of statement~(i) of Lemma \ref{lemma-3}. 
            
            To prove statement (ii), we may assume without loss of generality that $q$ is small enough; in particular, $0<q \le \frac{r}{\tau}$, where $r<\tau$. Let $\gamma=-\frac{1}{q} (\frac{r}{\tau}-1)>0$. 
            
            Consider first 
            the case $\tau \ge 1$. By Lemma \ref{lemma-2} (ii), with $\tau$ in place of $r$ 
            and $q=1$, we estimate 
             \begin{equation*} 
       a_2(\Lambda) \le C \,  \sum_{R \in \mathcal D} \lambda_R \, |R|_\sigma  \, a_R \, 
            \Big(  \frac{1}{|R|_\sigma}  \sum_{Q \subseteq R} \lambda_Q \,  |Q|_\sigma \, a_Q \Big)^{\tau-1}.
            \end{equation*} 
  Note that 
       \begin{equation} \label{l/7}
       \begin{split}
  a_Q & =  \Big(  \frac{1}{|Q|_\sigma}   \int_Q ( \sum_{S \subseteq Q} \lambda_S \,  \chi_S )^q \, d \sigma \Big)^{-\gamma} \\ & \le \Big(  \frac{1}{|Q|_\sigma}   \int_Q ( \sum_{S \subseteq Q} \lambda_S \,  \chi_S )^{-\epsilon} \, d \sigma \Big)^{\frac{q\gamma}{\epsilon}}, 
  \end{split}
       \end{equation}  
  for any $\epsilon>0$.       Using the preceding inequality with $\epsilon=1-\frac{r}{\tau}$, 
  we estimate 
               \begin{equation*}
\begin{split}    
           \frac{1}{|R|_\sigma}    \sum_{Q \subseteq R} \lambda_Q \,  |Q|_\sigma \, a_Q  & = 
           \frac{1}{|R|_\sigma}     \sum_{Q \subseteq R} \lambda_Q \,  |Q|_\sigma \, 
             \Big(  \frac{1}{|Q|_\sigma}   \int_Q ( \sum_{S \subseteq Q} \lambda_S \,  \chi_S )^q \, d \sigma \Big)^{-\gamma}\\ & \le 
               \frac{1}{|R|_\sigma} \sum_{Q \subseteq R} \lambda_Q \,  |Q|_\sigma \, 
               \frac{1}{|Q|_\sigma}   \int_Q 
             ( \sum_{S \subseteq Q} \lambda_S \,  \chi_S )^{-(1-\frac{r}{\tau})} \, d \sigma 
          \\ & =   \frac{1}{|R|_\sigma}   \int_R \sum_{Q \subseteq R} \lambda_Q \,  \chi_Q \, ( \sum_{S \subseteq Q} \lambda_S \,  \chi_S )^{-(1-\frac{r}{\tau})}  \, d \sigma
            \\
    &  \le  C \, \frac{1}{|R|_\sigma}  
       \int_R  ( \sum_{Q \subseteq R} \lambda_Q \, \chi_Q)^{\frac{r}{\tau}} d \sigma, 
        \end{split}
\end{equation*}   
       where we used summation by parts in the last line. 
       
       Since we are assuming that $0<q\le \frac{r}{\tau}<1$, it follows by Jensen's inequality, 
           \begin{equation*}
\begin{split}     
a_2(\Lambda) & \le C \,   \sum_{R \in \mathcal D} \lambda_R \, |R|_\sigma  \, a_R \, 
\Big (   \frac{1}{|R|_\sigma}   \int_R  ( \sum_{Q \subseteq R} \lambda_Q \,  \chi_Q )^{\frac{r}{\tau}} d \sigma
\Big)^{\tau-1} \\
           &  \le C \, \sum_{R \in \mathcal D} \lambda_R \, |R|_\sigma  \, 
            \Big ( \frac{1}{|R|_\sigma}   \int_R  ( \sum_{Q \subseteq R} \lambda_Q \,  \chi_Q )^{\frac{r}{\tau}} d \sigma
\Big)^{\frac{\tau}{r}(r-1)} \le C a_1(\Lambda),  
                  \end{split}
\end{equation*}    
        where in the last inequality we used Lemma \ref{lemma-2} (ii) with $\frac{r}{\tau}$ in place of $q$.  
            
            In the case $r<\tau<1$, we can assume again that $q$ is small enough; in particular, 
            $0<q<r$. 
        Using Lemma \ref{lemma-2} (ii) again for the sequence $\lambda_R \,  a_R$,  
        with $r_1=\tau$ in place of $r$, and $q_1=s$ in place of $q$ where $0<s<\tau$,  in the expression 
            for $a_4(\Lambda)$,  we have 
             \begin{equation*} 
       a_2(\Lambda) \le C \,  \sum_{R \in \mathcal D} \lambda_R \, |R|_\sigma  \, a_R \, 
            \Big(  \frac{1}{|R|_\sigma}  \int_R (\sum_{Q \subseteq R} \lambda_Q \, a_Q \, \chi_Q)^s d \sigma\Big)^{\frac{\tau-1}{s}}.  
       \end{equation*}          
            
           Let $\phi_R = (\sum_{Q \subseteq R} \lambda_Q \, \chi_Q)^{q}$. Then 
           $$a_Q \ge \Big(M^R_\sigma \phi_R (y)\Big)^{-\gamma}, \quad y\in R, \, \, Q \subseteq R,$$
          where $M^R_\sigma$ is the localized maximal  function \eqref{l3-6}. Hence, 
           $$
            \frac{1}{|R|_\sigma}  \int_R (\sum_{Q \subseteq R} \lambda_Q \, a_Q \, \chi_Q)^s d \sigma \ge    \frac{1}{|R|_\sigma}  \int_R \phi_R^{\frac{s}{q}} (M^R_\sigma \phi_R)^{-\frac{\gamma s}{q}} d \sigma.
           $$
         By H\"older's inequality and the maximal function inequality (see \eqref{ineq-3}), 
        \begin{equation*}      
 \int_R \phi_R^{\frac{s}{q}} (M^R_\sigma \phi_R)^{-\frac{\gamma s}{q}} d \sigma = 
 \int_R \Big(\frac{\phi}{M^R_\sigma \phi_R}\Big)^{p} (M^R_\sigma \phi_R)^{\delta p}\\
 \ge C \,  \int_R \phi_R^{\delta p},
          \end{equation*}  
          where $ p=\frac{s}{q}>1$     and $\delta = \frac{r}{\tau}<1$. Hence,
           \begin{equation*}
           \Big(  \frac{1}{|R|_\sigma}  \int_R (\sum_{Q \subseteq R} \lambda_Q \, a_Q \, \chi_Q)^s d \sigma\Big)^{\frac{\tau-1}{s}} \le C \, 
           \Big( \frac{1}{|R|_\sigma}  \int_R (\sum_{Q \subseteq R} \lambda_Q \, \chi_Q)^{\frac{r s}{\tau}} d \sigma
           \Big)^{\frac{\tau-1}{s}}.
           \end{equation*}  
         
         Assuming without loss of generality that $q$ is small enough, 
         so that $0<q\le \frac{r s}{\tau}$, 
         and using Jensen's inequality we estimate 
           \begin{equation*}
           \Big( \frac{1}{|R|_\sigma}  \int_R (\sum_{Q \subseteq R} \lambda_Q \, \chi_Q)^{\frac{r s}{\tau}} d \sigma
           \Big)^{- \frac{\gamma \tau}{rs}} \le a_R. 
                \end{equation*} 
         Consequently, for $r<\tau<1$, 
            \begin{equation*}
\begin{split}       
       a_2(\Lambda)     
      & \le C \,  
      \sum_{R \in \mathcal D} \lambda_R \, |R|_\sigma  \, a_R \,     \Big( \frac{1}{|R|_\sigma}  \int_R (\sum_{Q \subseteq R} \lambda_Q \, \chi_Q)^{\frac{r s}{\tau}} d \sigma
           \Big)^{\frac{\tau-1}{s}}    
       \\ & \le  C \, \sum_{R \in \mathcal D} \lambda_R \, |R|_\sigma  \, 
        \Big(   \frac{1}{|R|_\sigma} 
        \int_R ( \sum_{Q \subseteq R} \lambda_Q \,  \chi_Q )^{q} \, d \sigma \Big)^{\frac{r-1} q}
         \\ & = C \, a_4(\Lambda) \le C \, a_1(\Lambda), \qquad \qquad
             \end{split}
\end{equation*}    
which proves statement (ii).\end{proof}

The following corollary, which is merely a combination of Lemma \ref{lemma0} and Lemma \ref{lemma-3}, yields Theorem \ref{theorem1-3}. 

\begin{Cor}\label{corollary-3} Let $\sigma \in M^+(\R^n)$, and let $0< q \le \infty$.

(i) If either $0 \le \tau \le r$, or $r<q<\tau$, 
then 
\begin{equation*}
 a_1(\Lambda) \le  C \, a_2(\Lambda).  
\end{equation*} 

(ii) If $0<q< r$ and $0< \tau \le r$, then 
\begin{equation*} 
 a_2(\Lambda) \le  C \, a_1(\Lambda).  
\end{equation*} 
\end{Cor}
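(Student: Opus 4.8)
The plan is to obtain both inequalities by a short case analysis on the size of $\tau$ relative to $r$ (and, inside the range $\tau<r$, relative to $1$), reading off each case from Lemma~\ref{lemma0} or Lemma~\ref{lemma-3}; no genuinely new estimate is needed, since the two lemmas already cover all regimes once the trivial boundary value is recorded.

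First I would note the borderline case $\tau=r$, which occurs in both (i) and (ii): then the inner exponent $\frac{1}{q}\big(\frac{r}{\tau}-1\big)$ vanishes, so by definition $a_2(\Lambda)=\int_{\R^n}\big(\sum_{Q\in\mathcal D}\lambda_Q\chi_Q\big)^{\tau}\,d\sigma=a_1(\Lambda)$, and the desired (in)equality is an identity.

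For part (i) it remains to treat $\tau\neq r$. If $0<\tau<r$, I would split at $\tau=1$: when $\tau\le 1$ one has $0<\tau\le\min(r,1)$, so Lemma~\ref{lemma0}(i) gives $a_1(\Lambda)\le C\,a_2(\Lambda)$; when $\tau>1$ one has $1\le\tau<r$, so Lemma~\ref{lemma-3}(i) gives the same bound. In the remaining range $\tau>r$ — where the extra hypothesis on $q$ relative to $r$ is assumed — the bound $a_1(\Lambda)\le C\,a_2(\Lambda)$ is exactly the second alternative of Lemma~\ref{lemma0}(i). Together with the case $\tau=r$ this exhausts the hypothesis of (i). For part (ii), the only case with $\tau\neq r$ is $0<\tau<r$; since $0<q<r$ is also assumed, $\max(q,\tau)<r$ and Lemma~\ref{lemma0}(ii) yields $a_2(\Lambda)\le C\,a_1(\Lambda)$. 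Finally, reading (i) and (ii) together for $0<q<r<\infty$ and arbitrary $\tau\in(0,\infty)$ gives $a_1(\Lambda)\approx a_2(\Lambda)$, i.e.\ Theorem~\ref{theorem1-3}.

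The only real work is the bookkeeping: one must verify that Lemma~\ref{lemma0}(i) (for $\tau\le\min(r,1)$, and for $\tau>r$) and Lemma~\ref{lemma-3}(i) (for $1\le\tau<r$) leave no gap in $\tau\in(0,\infty)$ once the identity at $\tau=r$ is noted, and likewise that Lemma~\ref{lemma0}(ii) covers the whole set $\{0<q<r,\ 0<\tau<r\}$ required for (ii). The thresholds $\tau=1$ and $\tau=r$ are the points to watch: the non-strict inclusions in the lemma hypotheses are precisely what make the cases overlap rather than miss a value of $\tau$, so the main obstacle — such as it is — is simply to confirm that these endpoints are correctly absorbed and that the condition on $q$ in the $\tau>r$ regime matches the one in Lemma~\ref{lemma0}(i).
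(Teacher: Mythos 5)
Your approach---a short case analysis on $\tau$ relative to $r$ and $1$, reading each regime from Lemma~\ref{lemma0} or Lemma~\ref{lemma-3}, and noting the identity $a_2(\Lambda)=a_1(\Lambda)$ at $\tau=r$---is exactly what the paper intends; the text merely asserts the corollary ``is merely a combination of Lemma~\ref{lemma0} and Lemma~\ref{lemma-3},'' and your case-splitting supplies the bookkeeping. The cases $\tau\le\min(r,1)$ (Lemma~\ref{lemma0}(i)), $1\le\tau<r$ (Lemma~\ref{lemma-3}(i)), $\tau=r$ (identity), and $\max(q,\tau)<r$ (Lemma~\ref{lemma0}(ii) for part (ii)) are correctly sorted out.

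The one thing you flag but do not resolve is in fact the crux: the corollary's second alternative in (i) reads ``$r<q<\tau$,'' while Lemma~\ref{lemma0}(i) covers ``$q<r<\tau$.'' These are different conditions, and as you note your proof invokes the lemma for the latter, not the former. This is almost certainly a typo in the corollary, and the Remark immediately following it confirms as much: it asserts that (i) \emph{fails} whenever $0<r<\min(\tau,q)$, i.e.\ whenever both $r<q$ and $r<\tau$, which is exactly implied by ``$r<q<\tau$.'' So the stated hypothesis would contradict the Remark, and the intended reading is $q<r<\tau$, which your proof handles. You should have said this outright rather than leaving it as a point ``to confirm.''

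A minor aside: your closing sentence that (i) and (ii) together yield Theorem~\ref{theorem1-3} is not quite right, because (ii) as stated only covers $\tau\le r$; for $q<r<\tau$ the reverse bound $a_2(\Lambda)\le C\,a_1(\Lambda)$ comes from Lemma~\ref{lemma-3}(ii), which the corollary's (ii) omits. This does not affect the proof of the corollary itself.
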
 

\begin{Rem} Statement (i) of Corollary \ref{corollary-3}  
 fails if $0<r< \min (\tau, q)$; statement (ii)  fails if $q>r>\tau$.
 \end{Rem}


\section{Proofs of Theorems \ref{thm:main-p-laplacian} and \ref{thm:main2}}\label{Section 4} 

It is shown in  \cite{CV2} that \eqref{eq:p-laplacian} has a positive (super) solution 
if and only if the same is true for  \eqref{eq:int2} in the case $\alpha=1$. Moreover, 
the conditions in Theorems \ref{thm:main-p-laplacian} and \ref{thm:main2} are equivalent, since one can use embedding constants $\kappa(B)$ in place 
of $\varkappa(B)$ if $\alpha=1$ (see Sec. \ref{sect:wolff}). Thus, it suffices to prove only Theorem \ref{thm:main2}. 

Let $u \in L^q_{{\rm loc}}(\sigma)$ ($u\ge 0$) be a solution to  \eqref{eq:int2}. 
In \cite{CV2}, the following analogue of the bilateral pointwise estimates 
\eqref{two-sided} was obtained for nontrivial (minimal) solutions $u$ to \eqref{eq:int2}  in the  case $0<q<p-1$:
\begin{equation} \label{two-sided-frac}
c^{-1} [(\W_{\alpha, p} \sigma)^{\frac{p-1}{p-1-q}}+\mathbf{K}_{\alpha, p, q}  \sigma] 
\le u \le c [(\W_{\alpha, p} \sigma)^{\frac{p-1}{p-1-q}} +\mathbf{K}_{\alpha, p, q}  \sigma],  
\end{equation}
where $c>0$ is a constant which  depends only on $\alpha$, $p$, $q$, and $n$. 
Moreover  a nontrivial (super) solution exists if and only if both $\W_{\alpha, p} \sigma\not\equiv \infty$ and $\mathbf{K}_{\alpha, p, q}\not\equiv \infty$. 

It follows that 
$u \in L^r(\R^n)$ ($r>0$) exists if and only the following analogue of 
\eqref{cond-two-sided} holds: 
\begin{equation}\label{cond-two-sided-fr} 
 \mathbf{K}_{\alpha, p, q}  \sigma \in L^r(\R^n), \quad \W_{\alpha,p} \sigma \in L^{\frac{r(p-1)}{p-1-q}}(\R^n). 
\end{equation}
The first condition here   actually follows from the second one, both in \eqref{cond-two-sided} (in the case $\alpha=1$), and in 
\eqref{cond-two-sided-fr} 
that is, 
\begin{equation} \label{cond-two}
  \mathbf{K}_{\alpha, p, q}  \sigma \in L^r(\R^n) \Longrightarrow  \W_{\alpha, p}  \sigma \in L^{\frac{r(p-1)}{p-1-q}}(\R^n).
\end{equation}

Indeed, suppose that $\mathbf{K}_{\alpha, p, q}  \sigma \in L^r(\R^n) $. 
Using the following trivial estimate for balls $B=B(x, \rho)$, 
\begin{equation}\label{triv}
\sigma(B) |B|^{-\frac{n-\alpha p}{n(p-1)}}\le C \, [\kappa(B)]^q, 
\end{equation}
we see that 
\[
 \mathbf{K}_{\alpha, p, q}  \sigma(x) \ge C \, \int_0^\infty
 \Big[ \frac{\sigma(B(x, \rho))}{\rho^{n-\alpha p}}\Big]^{\frac{1}{p-1-q}} \frac{d \rho}{\rho}. 
\]
Hence,
\[
\int_0^\infty \Big[ \frac{\sigma(B(x, \rho))}{\rho^{n-\alpha p}}\Big]^{\frac{1}{p-1-q}} \frac{d \rho}{\rho} \in L^r(\R^n). 
\]

Estimates in  \cite{HJ}, \cite{JPW} yield that the preceding condition   is equivalent to $\W_{\alpha, p}  \sigma \in L^{\frac{r(p-1)}{p-1-q}}(\R^n)$. This proves \eqref{cond-two}.

It remains to show that $ \mathbf{K}_{\alpha, p, q}  \sigma \in L^r(\R^n)$ is equivalent to \eqref{main-q}. 

Suppose that $ \mathbf{K}_{\alpha, p, q}  \sigma \in L^r(\R^n)$. Then by 
\eqref{cond-two}, there exists a nontrivial (super) solution $u \in L^r(\R^n)$ to  either \eqref{eq:p-laplacian} or \eqref{eq:int2}. We set $d \omega=u^q d\sigma$. Then 
$\W_{\alpha, p}  \omega \le c \, u$, and hence 
$ \W_{\alpha, p}  \omega \in L^r(\R^n)$. 
By the estimates in  \cite{HJ}, \cite{JPW} 
again, this is equivalent to the condition
\[
\int_0^\infty \Big[ \frac{\omega(B(x, \rho))}{\rho^{n-\alpha p}}\Big]^{\frac{r}{p-1}} \frac{d \rho}{\rho} \in L^1(\R^n). 
\]
Using the estimate (see \cite{SV2}*{Lemma 4.2})
\begin{equation}\label{kappa-est-B}
[\kappa(B)]^{\frac{q (p-1)}{p-1-q}} \le C \, \int_B u^q d \sigma=C \omega(B),
\end{equation}
where $C=C(\alpha, p, q, n)$, for $B=B(x, \rho)$,  we obtain the following inequality (see \eqref{main-b} in the case $\alpha=1$), 
\[ 
\int_{\R^n} \int_0^\infty \frac{ [\kappa(B(x, \rho))]^{\frac{q r}{p-1-q}}}{\rho^{\frac{(n-\alpha p) r}{p-1}}}
 \frac{d \rho}{\rho} dx<\infty.
\]
This condition obviously implies its dyadic version (see \eqref{main-alpha}), 
\begin{equation}\label{dyad-alpha}
 \sum_{Q \in \mathcal{Q}} \frac{[\kappa(Q)]^{\frac{q r}{p-1-q}}}
{|Q|^{\frac{(n-\alpha p)r-n(p-1)}{n(p-1)}}} <\infty, 
\end{equation}
which also can be deduced independently using the pointwise estimate $\W_{\alpha, p}  \omega \ge 
C \, \W^d_{\alpha, p}  \omega$ (see \cite{HW}) and a version of \eqref{kappa-est-B}  
for cubes $Q$ in place of balls $B$.

Let us next prove that, conversely,  \eqref{dyad-alpha} yields $ \mathbf{K}_{\alpha, p, q}  \sigma \in L^r(\R^n)$. It is enough to show this for the dyadic version, that is,  
$ \mathbf{K}^d_{\alpha, p, q}  \sigma  \in L^r(\R^n)$.

We first  consider the case $0<r\le 1$. Then,  
clearly,  
\[
\Big (\mathbf{K}^d_{\alpha, p, q} \sigma(x)\Big)^r \le \sum_{Q \in \mathcal{Q}} \frac{[\kappa(Q)]^{\frac{q r}{p-1-q}}} {|Q|^{\frac{r(n-\alpha p)}{n(p-1)}}} \chi_Q(x). 
\]
Integrating both sides of the preceding inequality over $\R^n$ with respect to $dx$ 
shows that \eqref{dyad-alpha} yields $ \mathbf{K}^d_{\alpha, p, q}  \sigma \in L^r(\R^n)$. 

We now treat the more difficult case $1<r<\infty$. Let 
\[
\lambda_Q= 
\frac{[\kappa(Q)]^{\frac{q}{p-1-q}}} {|Q|^{\frac{n-\alpha p}{n(p-1)}}}, 
\quad \forall Q \in  \mathcal{Q}. 
\]
Notice that by Theorem \ref{theorem1-3} with $\tau =1$, $q=s$ and $d \sigma=dx$, we have 
that 
$ \mathbf{K}^d_{\alpha, p, q}  \sigma  \in L^r(\R^n)$ if and only if, for some $0<s<r$,
\begin{equation}\label{est-tau}
\sum_{Q \in \mathcal{Q}} \lambda_R \, |R| \, \Big[\frac{1}{|R|} 
\int_R \Big( \sum_{Q \subseteq R} \lambda_Q \chi_Q \Big)^{s} d x\Big ]^{\frac{r-1}{s}}<\infty.
\end{equation}

Let us fix a dyadic cube $R$, and denote by $u_R$ is a solution to the equation 
\[
u = \W_{\alpha, p} (u^q \sigma_R), \quad \text{on} \, \, \R^n, 
\]
where $d \sigma_R = \chi_R d \sigma$ is the restriction 
of $\sigma$ to $R$. Such a solution exists since $\kappa(R)<\infty$; moreover, by \cite{CV2}*{Lemma 4.2 and Corollary 4.3} 
for every $Q\subseteq R$, we have 
\begin{equation}\label{est-d}
\begin{split}
C(\alpha, p, q, n) [\kappa(Q)]^{\frac{q}{p-1-q}}
& \le \Big[\int_Q u_R^q d \sigma\Big]^{\frac{1}{p-1}}\\ & \le \Big[\int_R u_R^q d \sigma\Big]^{\frac{1}{p-1}} \le  [\kappa(R)]^{\frac{q}{p-1-q}}.
\end{split}
\end{equation}

By the first estimate in \eqref{est-d},  we have  
\[
C \, \lambda_Q \le  \Big [ \frac{\int_Q u_R d \sigma}{|Q|^{1-\frac{\alpha p}{n}}}\Big]^\frac{1}{p-1}, \quad Q\subseteq R.
\]
Hence, 
\[
\begin{split}
C \, \int_R \Big( \sum_{Q \subseteq R} \lambda_Q \chi_Q \Big)^{\epsilon} d x
&  \le \int_R \Big[ \sum_{Q \subseteq R} \Big (\frac{\int_Q u_R d \sigma}{|Q|^{1-\frac{\alpha p}{n}}}\Big)^{\frac{1}{p-1}} \chi_Q \Big]^{s} 
d x\\ & \le \int_R  \Big[\W_{\alpha, p} (u^q  d \sigma_R)\Big]^s dx.
\end{split}
 \]
Let $s=\min(1, p-1)<r$. Then by \cite[Lemma 3.1]{SV2}, we can 
estimate the average value of  $\Big[\W_{\alpha, p} (u^q  d \sigma_R)\Big]^s$ 
over $R$: 
\[
\frac{1}{|R|} \int_R  \Big[\W_{\alpha, p} (u_R^q  d \sigma_R)\Big]^s dx \le 
C(\alpha, p, n)  \Big (\frac{\int_R u^q_R d \sigma}{|R|^{1-\frac{\alpha p}{n}}}\Big)^{\frac{s}{p-1}}. 
\]
Further, by the second part of estimate \eqref{est-d}, 
\[
\int_R u^q_R d \sigma \le  [\kappa(R)]^{\frac{q(p-1)}{p-1-q}}.
\]
Combining these estimates, we deduce 
\[
 \Big[\frac{1}{|R|} 
\int_R \Big( \sum_{Q \subseteq R} \lambda_Q \chi_Q \Big)^{s} d x\Big ]^{\frac{r-1}{s}}
\le C 
\frac{[\kappa(R)]^{\frac{q(r-1)}{p-1-q}}}{|R|^{\frac{(n - \alpha p)(r-1)}{n(p-1)}}}.
\]
Consequently, for our choice of $ \lambda_R$, we have 
\begin{equation*}
\begin{split}
& \sum_{Q \in \mathcal{Q}} \lambda_R \, |R| \, \Big[\frac{1}{|R|} 
\int_R \Big( \sum_{Q \subseteq R} \lambda_Q \chi_Q \Big)^{s} d x\Big ]^{\frac{r-1}{s}}\\
& \le  C \sum_{R \in \mathcal{Q}} \frac{[\kappa(R)]^{\frac{q r}{p-1-q}}}
{|R|^{\frac{(n-\alpha p)r-n(p-1)}{n(p-1)}}}
<\infty.
\end{split}
\end{equation*}
Thus, \eqref{est-tau} holds, which  proves that $ \mathbf{K}_{\alpha, p, q}  \sigma \in L^r(\R^n)$. This completes the proofs of Theorems \ref{thm:main-p-laplacian} and \ref{thm:main2}.\qed



\begin{thebibliography}{ABC00}


\bibitem[AH]{AH}\textsc{D. R. Adams and L. I. Hedberg,}
\emph{Function Spaces and Potential Theory}, 
Grundlehren der math. Wissenschaften \textbf{314}, 
Springer, Berlin-Heidelberg-New York, 1996.




\bibitem[BK]{BK} 
\textsc{H. Brezis and S. Kamin}, \emph{Sublinear elliptic
equations on $\R^n$}, Manuscr. Math. \textbf{74} (1992), 87--106.



\bibitem[CV1]{CV1}\textsc{D. T. Cao and I. E. Verbitsky}, 
\emph{Finite energy solutions of quasilinear elliptic equations with sub-natural growth terms},  
Calc. Var. PDE \textbf{52} (2015), 
529--546.

\bibitem[CV2]{CV2}\textsc{D. T. Cao and I. E. Verbitsky}, 
\emph{Nonlinear elliptic equations and intrinsic potentials of Wolff type},  
J. Funct. Anal. \textbf{272} (2017), 
112--165.

\bibitem[CV3]{CV3}\textsc{D. T. Cao and I. E. Verbitsky}, 
\emph{Pointwise estimates of Brezis--Kamin type for solutions of sublinear elliptic equations},  
Nonlin. Analysis, Ser. A: Theory, Methods \& Appl. \textbf{146} (2016), 1--19.



\bibitem[COV1]{COV1}
\textsc{C. Cascante, J. M. Ortega, and I. E. Verbitsky}, 
 \emph{Trace
inequalities of Sobolev type in the upper triangle case}, 
  Proc. London Math. Soc. {\bf 80}, (2000), 391--414.

\bibitem[COV2]{COV2}
\textsc{C. Cascante, J. M. Ortega, and I. E. Verbitsky}, 
 \emph{Nonlinear potentials and two weight trace inequalities for general dyadic and radial kernels},
   Indiana Univ. Math. J., {\bf 53} (2004), 845--882.
   
   \bibitem[COV3]{COV3}\textsc{C. Cascante, J. M. Ortega, and I. E. Verbitsky}, 
\emph{On $L^p$--$L^q$ trace inequalities},  
J. London Math. Soc. \textbf{74} (2006), 
497--511.


   
   \bibitem[CV]{CV}
\textsc{W. S. Cohn and I. E. Verbitsky}, 
\emph{Factorization of tent spaces and Hankel operators,}  
J. Funct. Analysis, {\bf  175} (2000), 308--329. 
   
 \bibitem[FJ]{FJ} \textsc{M. Frazier and B. Jawerth}, 
 \emph{A discrete transform and decompositions of distribution
spaces,} J. Funct. Analysis, {\bf 93}  (1990), 34--170.



 
\bibitem[GV]{GV} \textsc{A. Grigor'yan and I. E. Verbitsky}, 
\emph{Pointwise estimates of solutions to nonlinear equations for nonlocal operators}, Ann. Scuola Norm. Super. Pisa (to appear), 
 DOI: \href{https://doi.org/10.2422/2036-2145.201802{\textunderscore}011} {10.2422/2036-2145.201802{\textunderscore}011},  
\href{https://arxiv.org/abs/1707.09596}{\url{arXiv:1707.09596}}.


\bibitem[HV1]{HV1}
\textsc{T.~S. H\"anninen and I.~E. Verbitsky}, 
\emph{Two-weight $L^p\to L^q$ bounds for positive dyadic operators in the case $0<q< 1 \le  p<\infty$}, Indiana Univ. Math. J.  (to appear), 
{\url{arXiv:1706.08657}}.

\bibitem[HV2]{HV2}
\textsc{T.~S. H\"anninen and I.~E. Verbitsky}, 
\emph{On two-weight norm inequalities for positive dyadic operators}, 
{\url{arXiv:1809.10800}}.





\bibitem[HM]{HM}\textsc{V. P. Havin and V. G. Maz'ya}, 
\emph{Nonlinear potential theory},  
Russ. Math. Surveys \textbf{27} (1972), 
71--148.
\bibitem[HW]{HW}\textsc{L. I. Hedberg and T. Wolff}, 
\emph{Thin sets in nonlinear potential theory},  
Ann. Inst. Fourier (Grenoble) \textbf{33} (1983), 
161--187.


\bibitem[HKM]{HKM}\textsc{J. Heinonen, T. Kilpel\"{a}inen, and O. Martio}, 
\emph{Nonlinear Potential Theory of Degenerate Elliptic Equations}, 
Dover Publications, 2006 (unabridged republ. of 1993 ed., Oxford University Press).


\bibitem[HJ]{HJ}\textsc{P. Honz\'{i}k and  B. Jaye}, 
\emph{On the good-$\lambda$ inequality for nonlinear potentials}, 
Proc. Amer. Math. Soc. \textbf{140 }(2012), 4167--4180. 

\bibitem[JPW]{JPW}\textsc{B. Jawerth, C. Perez, and G. Welland}, 
\emph{The positive cone in Triebel-Lizorkin spaces and the relation among potential and maximal operators},  
Harmonic Analysis and Partial Differential Equations (Boca Raton, FL, 1988),
Contemp. Math. \textbf{107}, Amer. Math. Soc., Providence, RI, 1990, 71--91.




\bibitem[KKT]{KKT}\textsc{T. Kilpel{\"a}inen, T. Kuusi and A. Tuhola-Kujanp{\"a}{\"a}}, 
\emph{Superharmonic functions are locally renormalized solutions},  
Ann. Inst. H. Poincar{\'e},  Anal. Non Lin{\'e}aire \textbf{28} (2011), 
775--795.

\bibitem[KiMa]{KiMa}\textsc{T. Kilpel\"ainen and J. Mal\'y}, 
\emph{The Wiener test and potential estimates for
  quasilinear elliptic equations}, Acta Math. \textbf{172}  (1994), 137--161.


\bibitem[KM]{KM}\textsc{T. Kuusi and G. Mingione}, 
\emph{Guide to nonlinear potential estimates},  
Bull. Math. Sci. \textbf{4} (2014), 
1--82.




 
\bibitem[MZ]{MZ}
\textsc{J. Mal{\'y} and W. Ziemer},  
\emph{Fine Regularity of Solutions of Elliptic Partial Differential Equations}, 
Math. Surveys Monogr. \textbf{51}, 
Amer. Math. Soc., Providence, RI, 1997.
 
\bibitem[Maz]{Maz}
\textsc{V. G. Maz'ya},  
\emph{Sobolev Spaces with Applications to Elliptic Partial Differential Equations}, 
Second, revised and augmented edition. Grundlehren der math. Wissenschaften \textbf{342}, 
Springer, Heidelberg, 2011.

\bibitem[MW]{MW}
\textsc{B. Muckenhoupt and R. Wheeden}, \emph{Weighted norm inequalities for fractional integrals},
Trans. Amer. Math. Soc. \textbf{192} (1974), 261--274. 

\bibitem[QV]{QV} \textsc{S. Quinn and I. E. Verbitsky}, 
\emph{A sublinear version of Schur's lemma and elliptic PDE}, 
Analysis \& PDE {\bf 11} (2018), 
439--466.    

\bibitem[SV1]{SV1}\textsc{A. Seesanea and I. E. Verbitsky}, 
\emph{Finite energy solutions to inhomogeneous nonlinear elliptic equations with sub-natural growth terms}, 
Adv. Calc. Var. (2017), published online,  
DOI: \href{https://doi.org/10.1515/acv-2017-0035}{10.1515/acv-2017-0035}, 
\href{https://arxiv.org/abs/1709.02048}{\url{arXiv:1709.02048}}. 

\bibitem[SV2]{SV2}\textsc{A. Seesanea and I. E. Verbitsky}, 
\emph{Solutions to sublinear elliptic equations with finite generalized energy},
Calc. Var. PDE (2018),  published online,  DOI: \href{https://doi.org/10.1007/s00526-018-1448-1} {10.1007/s00526-018-1448-1}, 
\href{https://arxiv.org/abs/1804.09255}{\url{arXiv:1804.09255}}. 


\bibitem[SV3]{SV3}\textsc{A. Seesanea and I. E. Verbitsky}, 
\emph{Solutions in Lebesgue spaces to nonlinear elliptic equations 
with sub-natural growth terms}, St. Petersburg Math. J. (to appear), 
\href{https://arxiv.org/abs/1811.10163}{\url{arXiv:1811.10163}}.   

\bibitem[TW]{TW} \textsc{N. S. Trudinger and X. J. Wang}, 
\emph{On the weak continuity of elliptic operators and applications to potential theory}, 
Amer. J. Math. {\bf 124} (2002), 
369--410.    

\bibitem[V]{V}\textsc{I. E. Verbitsky}, 
\emph{Sublinear equations and Schur's test for integral operators}, 
50 Years with Hardy Spaces, a Tribute to Victor Havin, Oper. Theory: Adv. Appl. \textbf{261} (2017), 
467--484.
\end{thebibliography}
\end{document}